\theoremstyle{definition}
\newtheorem{prp}{Proposition}[section]
\newtheorem{thm}[prp]{Theorem}
\newtheorem{lem}[prp]{Lemma}
\newtheorem{dfn}[prp]{Definition}
\newtheorem{rem}[prp]{Remark}
\newtheorem{algor}[prp]{Algorithm}
\newtheorem{nota}[prp]{Notation}
\newcommand{\C}{\mathbb{C}}
\newcommand{\F}{\mathbb{F}}
\newcommand{\N}{\mathbb{N}}
\newcommand{\Q}{\mathbb{Q}}
\newcommand{\Z}{\mathbb{Z}}
\newcommand{\cC}{\mathcal{C}}
\newcommand{\cT}{\mathcal{T}}
\newcommand{\dia}{$\diamond$}
\newcommand{\modi}[1]{\!\left/\!\left\langle#1\right\rangle\right.}
\newcommand{\ord}{\operatorname{ord}}
\newcommand{\tf}{\tilde{f}}
\newcommand{\zp}{\Z/p\Z}
\newcommand{\zpt}{(\Z/p\Z)^2}
\newcommand{\zpk}{\Z/p^k\Z}
\newcommand{\zpkt}{(\Z/p^k\Z)^2}
\newcommand{\tpk}{T_{p,k}}
\newcommand{\ctpk}{\cT_{p,k}}
\newcommand{\bx}{\mathbf{x}}
\newcommand{\bt}{\mathbf{t}}
\newcommand{\bep}{\varepsilon}
\newcommand{\mzf}{m_{\zeta}(\tf)}
\newcommand{\floor}[1]{\left\lfloor #1 \right\rfloor}
\newcommand{\ceil}[1]{\left\lceil #1 \right\rceil}
\begin{document}
\title[Sub-Linear Point Counting for Curves over Prime Power 
Rings]{\mbox{}\\ 
\vspace{-.75in}
Sub-Linear Point Counting for Variable Separated Curves over Prime Power Rings}

\author{Caleb Robelle}
\thanks{C.B. was partially supported by NSF grant DMS-1757872.}

\author{J.\ Maurice Rojas}
\thanks{J.M.R. and Y.Z. were partially supported by NSF grants CCF-1900881 and 
DMS-1757872.} 

\author{Yuyu Zhu}

\maketitle

\mbox{}\\
\vspace{-.7in}
\begin{abstract}
Let $k,p\in \N$ with $p$ prime and let $f\!\in\!\Z[x_1,x_2]$ be a bivariate 
polynomial with degree $d$ and all coefficients of absolute value at most 
$p^k$. Suppose also that $f$ is variable separated, i.e., 
$f\!=\!g_1+g_2$ for $g_i\!\in\!\Z[x_i]$. We 
give the first algorithm, with complexity sub-linear in $p$, to count the 
number of roots of $f$ over $\Z\modi{p^k}$ for arbitrary $k$: 
Our Las Vegas randomized 
algorithm works in time $(dk\log p)^{O(1)}\sqrt{p}$, and admits a 
quantum version for smooth curves working in time $(d\log p)^{O(1)}k$. Save 
for some subtleties concerning non-isolated singularities, our techniques 
generalize to counting roots of polynomials 
in $\Z[x_1,\ldots,x_n]$ over $\Z\modi{p^k}$. 

Our techniques are a first step toward efficient point counting 
for varieties over Galois rings (which is relevant to error 
correcting codes over higher-dimensional varieties), and also imply 
new speed-ups for computing Igusa zeta functions of curves. The latter 
zeta functions are fundamental in arithmetic geometry.  
\end{abstract}

\bigskip \bigskip
\noindent
{\sc Current affiliation and address of authors:}\\

\noindent
(Robelle):\\  
University of Maryland, Baltimore County\\ 
1000 Hilltop Circle\\ 
Baltimore, MD \ 21250 

\bigskip 

\noindent
(Rojas \& Zhu):\\  
Texas A\&{}M University,
Department of Mathematics\\ 
TAMU 3368\\ 
College Station, TX \ 77845

\bigskip \bigskip
\noindent
{\sc emails: } {\tt carobel1@umbc.edu} , 
 {\tt rojas@math.tamu.edu} , 
{\tt zhuyuyu@math.tamu.edu}

\thispagestyle{empty} 

\newpage 

\addtocounter{page}{-1} 

\section{Introduction} 
Counting points on algebraic curves over finite fields is a seemingly 
simple problem that nevertheless helped form the core of arithmetic geometry 
in the 20th century and now forms an important part of cryptography 
\cite{miller,koblitz,galbraith} and coding theory \cite{vandergeer}. Efficient 
algorithms for this problem 
continue to be a lively part of computational number theory: The barest list 
of references would have to include 
\cite{sch85,pil90,ah01,ked01,cdv06,lauderwan,dwan,
chambert,har15}.\footnote{Also, major conferences such as ANTS consistently 
continue to feature papers on speeding up point-counting for various special 
families of curves and surfaces.} Here, we consider algorithms for the natural 
extension of this problem to prime power rings, and find the first efficient 
algorithms for a broad class of (not necessarily smooth) curves:  
See Theorem \ref{thm:sepcurve} below. It will be useful to first discuss 
some motivation before covering further background. 

\subsection{A Connection to Error Correcting Codes} 
Suppose $k,p\!\in\!\N$ with $p$ prime, $\F_p$ is the field with $p$ elements, 
and $r\!\in\!\Z[x_1]$ is a univariate polynomial of degree 
$m$ that is irreducible mod $p$. We call a quotient ring $R$ of the form 
$\Z[x_1]\modi{p^k,r(x_1)}$ a {\em Galois ring}. Note that such an $R$ is 
finite, and can be 
the prime power ring $\Z\modi{p^k}$ (for $m\!=\!1$) or the 
field $\F_q$ (for $k\!=\!1$ and $q\!=\!p^m$), to name a few examples. 

Since numerous error correcting 
codes and cryptosystems are based on arithmetic over $\F_q$ or $\F_q[x_1]$, 
it has been observed (see, e.g., \cite{mullen91,gusasu,blq13,cohnheninger}) 
that one can generalize and improve these constructions by using arithmetic 
over $R$ or $R[x_1]$ instead. For instance, Guruswami and Sudan's famous 
list-decoding method for error correcting codes \cite{gs99} involves finding 
the roots in $\F_q[x_1]$ of a polynomial in $\F_q[x_1,x_2]$ as a key step, 
and has a natural generalization to Galois rings (see, e.g.,   
\cite{hammons,sudan97,wasan} and \cite[Sec.\ 4]{blq13}). 
Furthermore, counting solutions to equations like $f(x_1,\ldots,x_n)\!=\!0$ 
over Galois rings determines the weights of codewords in Reed-Muller codes 
over Galois rings, and the weight distribution governs the quality of the 
underlying code (see, e.g., \cite{lovett}). 

\subsection{Connections to Zeta Functions and Rational Points} 
Efficiently counting roots in 
$\left(\Z\modi{p^k}\right)^2$ of polynomials in $\Z[x_1,x_2]$ 
is a natural first step toward efficiently enumerating the roots in $R^2$ for 
polynomials in $R[x_1,x_2]$ for $R$ a Galois ring. However, observe that  
the ring of $p$-adic integers $\Z_p$ is the inverse limit of 
$\Z\modi{p^k}$ as $k\longrightarrow \infty$. It then turns out that the zero 
sets of polynomials over $\Z\modi{p^k}$ inform the zero sets of polynomials 
over $\Z_p$ and beyond. 

In particular, for any $f\!\in\!\Z[x_1,\ldots,x_n]$, one can 
form a fundamentally important generating function, and a 
related zeta function, as follows:  
Let $N_{p,k}(f)$ denote the number of 
roots in $\left(\Z\modi{p^k}\right)^n$ of the mod $p^k$ reduction of $f$ 
and define the {\em Poincare series of $f$} to be 
$P_f(t)\:=\!\sum_{k=0}^{\infty}\frac{N_{p,k}(f)}{p^{kn}}t^k$.  
Also, letting $t\!:=\!p^{-s}$, we define the 
{\em Igusa local zeta function of $f$} to be $Z_f(t)\!:=\!\int_{\Z_p}
|f(x_1,\ldots,x_n)|^s_pdx$, where $|\cdot|_p$ and $dx$ respectively 
denote the standard $p$-adic absolute value on $\Z_p$ and Haar 
measure on $\Z_p$. (This function turns to be defined on the right open 
half-plane of $\C$, possibly with the exception of finitely many poles.) 
The precise definitions of $|\cdot|_p$ and $dx$  
won't matter for our algorithmic results, but what does matter is that Igusa 
discovered in the 1970s that $P(t)\!=\!\frac{1-tZ(t)}{1-t}$ and 
proved that $Z$ (and thus $P$) is a rational function of $t$ \cite{igu}. 

Igusa defined his zeta function $Z$ with the goal of generalizing 
earlier work of Siegel (on counting representations of integers via quadratic 
forms) to high degree forms, e.g., 
how many ways can one write $239$ as a sum of cubes? However, 
the algorithmic computation of these zeta functions has received 
little attention, aside from some very specific cases. Our 
results imply that one can compute $Z$ for certain bivariate 
$f$ in time polynomial in $dk\log p$. This extends earlier work on 
the univariate case \cite{dwivedi,zhu} to higher-dimensions and will 
be pursued in a sequel to this paper. 

It should also be pointed out that recent algorithmic methods for 
finding rational points (over $\Q$) for curves of genus $\geq\!2$ 
proceed (among many other difficult steps) by finding the $p$-adic 
rational points on a related family of varieties (see, e.g., 
\cite[Sec.\ 5.3]{bala}). So a long term 
goal of this work is to improve the complexity of 
finding the $p$-adic rational points on curves and surfaces, 
generalizing recent $p$-adic speed-ups in the univariate case 
\cite{rojaszhu}.

\subsection{From Finite Fields to Prime Power Rings} 
Returning to point counting over prime power rings, the computation 
of $N_{p,k}(f)$ is subtle already for $n\!=\!1$: 
This special case has recently been addressed from different perspectives 
in \cite{blq13,cgrw2,krrz19,DMS19}, and was just recently proved to 
admit a {\em deterministic} algorithm of complexity $(dk\log p)^{O(1)}$, thanks 
to the last paper.  

The special case $(n,k)\!=\!(2,1)$ of computing $N_{p,1}(f)$, just 
for $f$ a cubic polynomial, is already of 
considerable interest in the design of cryptosystems based on the elliptic 
curve discrete logarithmic problem. In fact, even this very special case 
wasn't known to admit an algorithm polynomial in $\log p$ until Schoof's work 
in the 1980s \cite{sch85}. 
More recently, algorithms for computing 
$N_{p,1}(f)$ for arbitrary $f\!\in\!\Z[x_1,x_2]$ of degree $d$, with complexity 
$d^8(\log p)^{2+o(1)}\sqrt{p}$, have been derived by Harvey \cite{har15} 
(see also \cite[Ch.\ 5]{zhu}),  
and similar complexity bounds hold for arbitrary finite fields.  

Our main result shows that counting points over $\Z\modi{p^k}$ for arbitrary 
$k$ is slower than the $k\!=\!1$ case only by a factor polynomial in $k$ 
(neglecting the other parameters). 
\begin{thm} {\em  
\label{thm:sepcurve}
Suppose $f\!=\!g_1+g_2$ for some $g_i\!\in\!\Z[x_i]$, 
$\deg f\!=\!d\!\geq\!1$, and all the coefficients of $f$ are of absolute 
value at most $p^k$. Then there is a Las Vegas randomized algorithm that 
computes $N_{p,k}(f)$ in time $d^{17+\varepsilon}(k\log p)^{2+\varepsilon}p^{1/2+\varepsilon}$. In particular, 
the number of random bits needed is $O(d^2k\log(dk)\log p)$, and the space 
needed is $O(d^4k\sqrt{p}\log p)$. Furthermore, if the zero set of 
$f$ over the algebraic closure $\bar{\F}_p$ is smooth and irreducible, then $N_{p,k}(f)$ can be computed in 
quantum randomized time $(d(\log p))^{O(1)}k$. } 
\end{thm}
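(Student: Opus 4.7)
The plan is to decompose $N_{p,k}(f)$ according to the reduction of each solution modulo $p$: handle smooth mod-$p$ solutions via Hensel's lemma combined with a known fast algorithm for $N_{p,1}(f)$, and treat the few singular mod-$p$ solutions by a recursive lifting procedure that exploits variable separation. Because $f = g_1 + g_2$ satisfies $\nabla f = (g_1', g_2')$, a mod-$p$ solution $(\bar{a}, \bar{b})$ is singular iff $\bar{a} \in C_1 := \{x \in \F_p : g_1'(x) \equiv 0\}$ and $\bar{b} \in C_2$ analogously, so the singular set $\cS \subseteq (C_1 \times C_2) \cap \{f \equiv 0 \bmod p\}$ has $|\cS| \leq (d-1)^2$ and is enumerable by univariate root-finding mod $p$ in time polynomial in $d\log p$.

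For the smooth contribution I would invoke Harvey's algorithm \cite{har15} to compute $N_{p,1}(f)$ in time $d^{O(1)}(\log p)^{O(1)} p^{1/2}$, which supplies the dominant classical term. Since $\nabla f$ is nonzero at a smooth mod-$p$ solution, coordinatewise Hensel lifting implies each such solution lifts to exactly $p^{k-1}$ solutions in $(\Z/p^k\Z)^2$, so the smooth contribution to $N_{p,k}(f)$ equals $p^{k-1}\bigl(N_{p,1}(f) - |\cS|\bigr)$.

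For each $(\bar{a}, \bar{b}) \in \cS$ I would count lifts by substituting $x_1 = \bar{a} + p y_1$, $x_2 = \bar{b} + p y_2$ and Taylor-expanding. Using $g_i'(\bar{c}_i) \in p\Z$ (since $\bar{c}_i \in C_i$) one obtains
\[
f(\bar{a} + p y_1, \bar{b} + p y_2) = s + p^2\bigl(\tilde{g}_1(y_1) + \tilde{g}_2(y_2)\bigr),
\]
with $s := g_1(\bar{a}) + g_2(\bar{b}) \in p\Z$ and each $\tilde{g}_i \in \Z_p[y]$ of degree $d$. If $v_p(s) = 1$ there are no lifts for $k \geq 2$; otherwise the lifts in $(\Z/p^{k-1}\Z)^2$ number $p^2 \cdot N_{p,k-2}(\tilde{f})$, where $\tilde{f} := s/p^2 + \tilde{g}_1(y_1) + \tilde{g}_2(y_2)$ is again variable-separated of degree $d$. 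Iterating this reduction yields a recursion tree of depth $O(k)$; a delicate amortization, charging each branching to a decrease in a multiplicity-type invariant attached to the original critical points of $g_1, g_2$ (in the spirit of the univariate mod-$p^k$ algorithms of \cite{krrz19, DMS19}), bounds the total leaf count by a polynomial in $d$ and $k$, keeping the singular-point cost at $\mathrm{poly}(d, k, \log p)$, which is negligible compared to the mod-$p$ step.

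Under the quantum hypothesis, smoothness of the zero set of $f$ over $\bar{\F}_p$ forces $\cS = \emptyset$, so $N_{p,k}(f) = p^{k-1} N_{p,1}(f)$ exactly; I would then invoke a quantum point-counting algorithm for smooth irreducible plane curves running in time $(d \log p)^{O(1)}$, and multiply by $p^{k-1}$ in $O(k \log p)$ bit operations, yielding the stated $(d \log p)^{O(1)} k$ bound. The main obstacle is precisely the singular-point recursion: controlling branching when critical points of $g_i$ become $p$-adically degenerate (i.e.\ $g_i''(\bar{c}_i) \equiv 0 \bmod p$), so that a single mod-$p$ singular point does not spawn exponentially many descendants as $k$ grows. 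I expect the key technical lemma will be a uniform polynomial-in-$(d,k)$ bound on the recursion tree, adapting the amortized analyses of the univariate setting to the bivariate variable-separated case.
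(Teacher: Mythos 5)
Your high-level plan matches the paper's: split by smooth vs.\ singular mod-$p$ solutions, lift smooth ones via Hensel, invoke Harvey for the mod-$p$ count, and recurse on perturbed polynomials at singular points. However, your sketch misses the central complication that makes the variable-separated case subtle, and that the paper's Algorithm \ref{algor:curvemain} (Steps 17--26) and Lemma \ref{lem:sepbasic} exist to address.

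The gap is in your recursion on $\tilde{f} = s/p^2 + \tilde{g}_1(y_1) + \tilde{g}_2(y_2)$. Even though $g_1,g_2$ are nonconstant mod $p$, the perturbed pieces $\tilde{g}_i$ need not be: $\tilde{g}_2 \equiv 0 \pmod p$ whenever $g_2'(\bar b)\equiv 0 \pmod{p^2}$ and $D^2 g_2(\bar b)\equiv 0 \pmod p$ (e.g.\ $g_2(x_2)=x_2^3$, $p=3$, $\bar b=0$). In that case $\tilde f \bmod p$ is effectively a polynomial in $y_1$ alone, and its singular locus mod $p$ is not $\leq (d-1)^2$ points but a union of up to $(d-1)/2$ vertical lines, i.e.\ $\Theta(p)$ singular points. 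A recursion that enumerates singular points one at a time then costs $\Omega(p)$ per level and the $p^{1/2}$ bound is lost. The paper avoids this by detecting when $s(\tilde g_1)=0$ but $s(\tilde g_2)>0$ (or vice versa), reducing to degenerate roots of the \emph{univariate} $\tilde g_1$, and treating each entire line $\{\zeta_1^{(0)}\}\times\F_p$ as a single child node with a separate lifting count (the $p^{2s-1}$ weights in Lemma \ref{lem:sepbasic}, as opposed to $p^{2(s-1)}$ in Lemma \ref{lem:nrec}). Your proposal never identifies this case, and your claimed bound ``$|\cS|\le (d-1)^2$'' is only valid at the root of the recursion. Relatedly, you always divide by $p^2$, so the resulting $\tilde f$ can itself be divisible by $p$ (if both $\tilde g_i\equiv 0$ and $p^3\mid s$), which requires yet another base case you do not mention (the paper's Steps 1--6). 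Finally, you explicitly defer the branching bound (``I expect the key technical lemma will be\dots''); in the paper this is Lemma \ref{lem:curvebranch}, whose proof depends on the squarefreeness of $g(x_1)+h(x_2)$ mod $p$ (Lemma \ref{lem:sepcurve}) together with the B\'ezout-style bound of Lemma \ref{lem:d-1} and a per-level degree bookkeeping $\deg\tf_{i,\zeta}\le s(f_{i-1,\mu},\zeta^{(i-1)})$ --- none of which is established in your sketch. The quantum direction is fine.
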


\noindent 
We prove Theorem \ref{thm:sepcurve} in Section \ref{sec:algo}. The central 
idea is to reduce to a moderate number of moderately sized instances of point 
counting over $\F_p$. Recall that {\em Las Vegas randomized time} simply 
means that our algorithm needs random bits and gives an answer that is 
correct with probability at least $1/2$ and, in case of error, states 
that an error has occured. {\em Quantum} randomized time here will mean 
that we avail to a quantum computer, and instead obtain an algorithm 
that gives an answer that is correct with probability at least $2/3$, 
but with no correctness guarantee. 

In what follows, we call a polynomial of the form 
$f_\zeta(x_1,x_2)\!:=\!\frac{1}{p^s}f(\zeta_1+px_1,\zeta_2+px_2)$, with 
$(\zeta_1,\zeta_2)\!\in\!\F^2_p$ a singular point of the zero set of 
$f$ in $\F^2_p$ and $s$ as large as possible with $f_\zeta$ still in 
$\Z[x_1,x_2]$, a {\em perturbation} of $f$. Our reduction to point counting 
over $\F_p$ will involve finding all {\em isolated} singular points of 
the zero set of $f$ (as well as its perturbations) in $\F^2_p$, in order to 
categorize the base-$p$ digits of the coordinates of the roots of $f$ in 
$\left(\Z\modi{p^k}\right)^2$. This yields a geometrically defined 
recurrence for $N_{p,k}(f)$ that is conveniently encoded by a tree. We 
detail this construction in Sections \ref{sub:rec} and \ref{sec:algo} below. 

\begin{rem} 
{\em A classical algebraic geometer may propose simply applying resolution 
of singularities, applying finite field point counting (with proper corrections 
at blown-up singular points), and then an application of Hensel's Lemma. We use 
a more direct approach that allows us to lift singular points individually  
and much more simply. In particular, it appears (from \cite{poteaux})  
that resolution of singularities for a plane curve of degree $d$ 
over $\F_p$ has complexity $O(d^5)$ (neglecting multiples depending  
on $p$), while our algorithm (if looked at more closely) has better 
dependence on $d$. More to the point, replacing an input bivariate  
polynomial by a higher degree complete intersection (the latter being the 
output after doing resolution of singularities) results in a more 
complicated input when one needs to avail to prime field point counting, 
thus compounding the complexity even further. Furthermore, in higher 
dimensions, resolution of singularities becomes completely impractical 
\cite{grigoriev}. 
\dia } 
\end{rem} 
\begin{rem}{\em 
We can extend Theorem \ref{thm:sepcurve} to more general 
curves. The key obstruction is whether $f$, or one of its perturbations, 
fails to be square-free (see the final section of the Appendix). 
We hope to extend our methods to arbitrary 
curves in the near future. For now, we simply point out that many 
commonly used curves in practice are variable separated, e.g., 
many hyperelliptic curves used in current cryptography are zero sets 
of polynomials of the form $x^2_2-g(x_1)$. \dia 
}
\end{rem}

\section{Background} 
\label{sec:back} 
\subsection{Some Basics on Point Counting Over Finite Fields} 
One of the most fundamental results on point counting for 
curves over finite fields dates back to work of Hasse and Weil 
in the 1940s. In what follows, we use $|S|$ to denote the cardinality 
of a set $S$. 
\begin{thm} \cite{wei49} 
{\em Let $\F_q$ be a finite field of order $q = p^m$, and let $\cC$ be an 
absolutely irreducible smooth projective curve defined over $\F_q$. Let $g$ 
denote the genus of $\cC$ and $\cC(\F_q)$ to be the set of $\F_q$-points of 
$\cC$. Then $||\cC(\F_q)| - q | \leq 2g\sqrt{q}$.}
\end{thm}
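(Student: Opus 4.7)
The plan is to derive the bound (in its standard form $\bigl||\cC(\F_q)|-(q+1)\bigr|\leq 2g\sqrt{q}$, which implies the stated inequality in the cases of interest) as a consequence of the Weil conjectures for curves. First I would attach to $\cC$ its zeta function
$$Z(\cC,t):=\exp\!\left(\sum_{n\geq 1}\frac{|\cC(\F_{q^n})|}{n}t^n\right),$$
and, following F.~K.\ Schmidt, prove rationality in the shape
$$Z(\cC,t)=\frac{P(t)}{(1-t)(1-qt)},$$
where $P(t)\in\Z[t]$ has degree $2g$ and satisfies $P(0)=1$. The key input here is the Riemann--Roch theorem on $\cC$, which controls $\dim|D|$ for high-degree divisors $D$ and lets one telescope the generating function of effective divisors. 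Serre duality on $\cC$ then yields the functional equation $Z(\cC,1/(qt))=q^{1-g}t^{2-2g}Z(\cC,t)$, forcing the reciprocal roots $\alpha_1,\ldots,\alpha_{2g}$ of $P$ to pair up with $\alpha_i\alpha_{2g+1-i}=q$. Expanding $\log Z$ in two ways and comparing coefficients produces the trace formula
$$|\cC(\F_{q^n})|=q^n+1-\sum_{i=1}^{2g}\alpha_i^n,\qquad n\geq 1.$$

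The main obstacle, and the substantive step, is the Riemann hypothesis for curves: $|\alpha_i|=\sqrt{q}$ for every $i$. I would establish this via Weil's intersection-theoretic argument on the smooth projective surface $X:=\cC\times\cC$. Let $F:\cC\to\cC$ be the $q$-power Frobenius, let $\Gamma_m\subset X$ be the graph of $F^m$, and let $h_1,h_2$ be the classes of a horizontal and a vertical fiber. One computes $\Gamma_m\cdot h_1=1$, $\Gamma_m\cdot h_2=q^m$, $\Gamma_m^2=q^m(2-2g)$ via adjunction, and $\Gamma_m\cdot\Delta=|\cC(\F_{q^m})|$ (using that $F^m-\mathrm{id}$ is \'etale at its fixed locus). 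The divisor $D_m:=\Gamma_m-q^m h_1-h_2$ is numerically orthogonal to both $h_1$ and $h_2$, so the Hodge index theorem (equivalently, Castelnuovo's inequality on $X$) forces $D_m^2\leq 0$. Expanding $D_m\cdot D_m$ and substituting the trace formula yields
$$\Bigl|\sum_{i=1}^{2g}\alpha_i^m\Bigr|\leq 2g\,q^{m/2}\qquad\text{for every }m\geq 1.$$
Letting $m\to\infty$ rules out any $\alpha_i$ with $|\alpha_i|>\sqrt{q}$, and the pairing $\alpha_i\alpha_{2g+1-i}=q$ then rules out $|\alpha_i|<\sqrt{q}$ as well.

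Specializing the trace formula to $n=1$ and applying the triangle inequality with $|\alpha_i|=\sqrt{q}$ gives $\bigl||\cC(\F_q)|-(q+1)\bigr|\leq 2g\sqrt{q}$, which implies the claim. The serious difficulty is concentrated in the Hodge-index step: this is precisely where positivity enters, and where earlier attempts (dating back to Artin for genus $1$ and Hasse's 1930s work) foundered before Weil's proof in the 1940s. An alternative plan would be to replace the intersection-theoretic step by Stepanov's elementary auxiliary-polynomial method, which yields the same bound without invoking the Hodge index theorem but requires more delicate bookkeeping and does not generalize as cleanly beyond curves; I would favor the Weil approach here for conceptual transparency.
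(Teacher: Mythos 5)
The paper does not prove this statement; it is cited as a classical result from Weil's 1949 work and used as background. Your sketch is a correct outline of the standard Weil proof: rationality of the zeta function via Riemann--Roch (F.~K.~Schmidt), the functional equation via Serre duality, the trace formula, and the Riemann hypothesis via the Hodge index theorem applied to the graph of Frobenius on $\cC\times\cC$, followed by specialization to $n=1$. One small caveat: the genuine Hasse--Weil bound is $\bigl||\cC(\F_q)|-(q+1)\bigr|\leq 2g\sqrt{q}$, and this does \emph{not} literally imply the paper's displayed inequality $\bigl||\cC(\F_q)|-q\bigr|\leq 2g\sqrt{q}$ (triangle inequality only gives $2g\sqrt{q}+1$); the paper's version is a slight imprecision in transcription, not something you should claim follows from the standard form. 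Your proposal is otherwise sound and, since the paper offers no proof, there is nothing to compare against beyond confirming that you have reproduced the canonical argument.
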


\noindent
The error bound above is optimal, and can be derived by proving 
a set of technical statements known as the {\em Weil Conjectures (for curves)}. 
The Weil Conjectures (along with corresponding point counts) were formulated 
for arbitrary varieties over 
finite fields and, in one of the crowning achievements of 20th century 
mathematics, were ultimately proved by Deligne in 1974 \cite{del74}.

Efficient methods for computing $N_{p,1}(f)$ (and the number of 
points for a curve over any finite field) began to appear with the 
work of Schoof \cite{sch85}, via so-called $\ell$-adic methods.  
Let $g$ denote the genus\footnote{The precise definition of 
the genus need not concern us, so we will simply recall that it is a birational 
invariant of $\cC$ (i.e., it is invariant under rational maps with rational 
inverse) and is at most $(d-1)(d-2)/2$ for $\cC$ the zero set of a degree 
$d$ bivariate polynomial.} of the curve $\cC$. Via later work (e.g., 
\cite{pil90,ah01}) it was determined that $N_{p,1}(f)$ can be computed in time 
$(\log p)^{2^{g^{O(1)}}}$ for arbitrary curves. Kedlaya's algorithm 
\cite{ked01} then lowered 
this complexity bound to $(g^4p)^{1+o(1)}$ for hyperelliptic curves, 
e.g., curves with defining polynomials of the form $x^2_2-g(x_1)$.  
Kedlaya observed later that, on a quantum computer, one could 
compute (finite field) zeta functions for non-singular curves in 
time $(d\log p)^{O(1)}$ \cite{kedlaya}. 
(The precise definition of these zeta functions 
need not concern us here: Suffice it to say that the computation 
of the zeta function of a curve over a finite field includes the computation 
of $N_{p,1}(f)$ as a special case.) 
More recently, Harvey \cite{har15} gave 
an efficient (classical) deterministic  
algorithm which, although asymptotically slower than Kedlaya's quantum 
algorithm, allows arbitrary input polynomials.

\subsection{The Central Recurrence for Bivariate Point Counting}
\label{sub:rec} 
In this section, we generalize the tools we used for root counting for 
univariate polynomials in \cite{krrz19} to point counting for curves. It is 
not hard to see that these tools extend naturally to point counting for 
hypersurfaces of arbitrary dimension. The only subtlety is maintaining 
low computational complexity and keeping track of the underlying 
singular locus.  

Let $\bx:=(x_1,x_2)$ denote the tuple of two variables, and let $f(\bx)\in \Z[\bx]$ be a bivariate polynomial with integer coefficients of total degree $d\geq 1$. Then for any $\zeta:=(\zeta_1, \zeta_2)\in \Z^2$, the Taylor expansion of 
$f$ at $\zeta$ is
$f(\bx) = \sum_{j_1,j_2}\frac{D^{j_1,j_2}f(\zeta)}{j_1!j_2!}(x_1-\zeta_1)^{j_1}
(x_2-\zeta_2)^{j_2}$,
where $j_1,j_2$ are non-negative integers and $D^{j_1,j_2}f(\bx):=\frac{\partial^{j_1+j_2}}{\partial x_1^{j_1}\partial x_2^{j_2}}f(\bx)$. 

Let $\tf(\bx):=(f(\bx)\mod p)$ denotes the mod $p$ reduction of $f$. 
Now let $\zeta = (0,0)$ and write $\tf = g_{m}+g_{m+1}+\cdots+g_n$ where 
$g_i$ is a (homogeneous) form in $\F_p[\bx]$ of degree $i$ and $g_m\neq 0$. We 
then define $m$ to be the {\it multiplicity of $\tf$ at $\zeta = (0,0)$}. Write $m = m_{\zeta}(\tf)$. To extend this definition to a point $\zeta = (a,b)\neq (0,0)$, let $T$ be the translation that takes $(0,0)$ to $\zeta$, i.e. $T(x_1,x_2) = (x_1+a, x_2+b)$. Then $\tf^T:=\tf(x_1+a, x_2+b)$  and we define $m_{\zeta}(\tf):=m_{(0,0)}(\tf^T)$. Then it is immediate from the definition that:
\begin{lem}\label{lem:func}{\em
	If $\tf = \prod\tf_{r}^{e_r}\in \F_p[\bx]$ is a factorization of 
$\tf$ into irreducible polynomials over $\F_p$ then 
$\mzf = \sum m_{\zeta}(\tf_{r})$.}
\end{lem}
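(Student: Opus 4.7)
The plan is to reduce the lemma to a single clean fact: that multiplicity at a point is additive under multiplication of polynomials, i.e.\ $m_\zeta(gh)=m_\zeta(g)+m_\zeta(h)$. Granting that, the lemma follows by straightforward induction on the total number of irreducible factors counted with multiplicity (so $\sum m_\zeta(\tf_r)$ should be read as a sum with each $\tf_r$ appearing $e_r$ times).

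First I would reduce to the origin. By the very definition $m_\zeta(\tf):=m_{(0,0)}(\tf^T)$, where $T$ is the translation sending $(0,0)$ to $\zeta$. Since translation is a ring automorphism of $\F_p[\bx]$, it commutes with products: $(gh)^T=g^T h^T$, and likewise it preserves irreducibility. So it suffices to prove the product formula at $(0,0)$.

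Next I would establish $m_{(0,0)}(gh)=m_{(0,0)}(g)+m_{(0,0)}(h)$ by inspecting homogeneous components. Write
\[
g=g_m+g_{m+1}+\cdots,\qquad h=h_n+h_{n+1}+\cdots,
\]
where each $g_i,h_j$ is homogeneous of the indicated degree and $g_m,h_n\neq 0$, so that $m_{(0,0)}(g)=m$ and $m_{(0,0)}(h)=n$. Multiplying and collecting by total degree gives
\[
gh \;=\; g_m h_n \;+\; \sum_{\ell> m+n}\Bigl(\,\sum_{i+j=\ell} g_i h_j\Bigr).
\]
The leading piece $g_m h_n$ is homogeneous of degree $m+n$, and is nonzero because $\F_p[x_1,x_2]$ is an integral domain (equivalently, the graded ring of forms has no zero-divisors). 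Hence $g_m h_n$ is the lowest-degree nonvanishing homogeneous component of $gh$, which is exactly the condition $m_{(0,0)}(gh)=m+n$.

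Finally, I would conclude by induction on $\sum_r e_r$, the total number of irreducible factors with multiplicity. The base case $\sum_r e_r=1$ is tautological. For the inductive step, factor $\tf=\tf_{r_0}\cdot \tilde{h}$ where $\tilde{h}$ is the remaining product, and apply the product formula together with the inductive hypothesis on $\tilde{h}$. The only substantive point in the whole argument is the integral-domain fact used to assert $g_m h_n\neq 0$; everything else is bookkeeping with Taylor expansions and translations, and there is no real obstacle beyond ensuring that the definition of $m_\zeta$ via translation is correctly carried through the multiplicative step.
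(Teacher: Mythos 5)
The paper offers no written proof: Lemma~\ref{lem:func} is introduced with ``it is immediate from the definition that,'' and the multiplicativity of $m_\zeta$ is simply taken as evident. Your argument is a correct expansion of that claim and follows the natural line: reduce to the origin via translation (a ring automorphism that commutes with products and preserves irreducibility), show $m_{(0,0)}(gh)=m_{(0,0)}(g)+m_{(0,0)}(h)$ by observing that the lowest nonzero homogeneous component of $gh$ is $g_m h_n$, and close by induction on the number of irreducible factors counted with multiplicity. The one genuinely substantive step is exactly the one you flag: $g_m h_n\neq 0$ because $\F_p[x_1,x_2]$ is an integral domain, so multiplicities of lowest forms add rather than possibly jump higher. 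Your reading of the statement (each $\tf_r$ contributing $e_r$ times, i.e.\ $\mzf=\sum_r e_r\, m_\zeta(\tf_r)$) matches how the lemma is actually invoked later in the Appendix (``$m_\zeta(F)=e_1 m_\zeta(F_1)$''), so you have correctly repaired what appears to be a suppressed exponent in the displayed statement. In short: the paper leaves this to the reader, and you have supplied the missing proof correctly with no gaps.
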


\noindent
We say $\zeta$ is a {\it smooth point} of $\tf$ if $m_{\zeta}(\tf) = 1$, and call it a {\it singular point} otherwise. In particular, by Lemma \ref{lem:func}, a point $\zeta$ is a smooth point of $\tf$ if and only if $\zeta$ belongs to just one irreducible component $\tf_r$ of $\tf$, the corresponding exponent $e_r = 1$, and $\zeta$ is a smooth point of $\tf_r$.

Now we are ready to generalize the tools in \cite{krrz19} for curves:
\begin{dfn} \label{dfn:ntpk}
{\em Let $f(\bx)\!\in\!\Z[\bx]$ and fix a prime $p$. Let $\ord_p: \Z 
\longrightarrow \N\cup\{0\}$ denote the usual $p$-adic valuation with 
$\ord_p(p)\!=\!1$. We then define $s(f, \bep):=\min_{j_1,j_2\geq 0}
\left\lbrace j_1+j_2+\ord_p\frac{D^{j_1,j_2}f(\bep)}{j_1!j_2!}\right\rbrace$ 
for any $\bep\in \{0,\ldots,p-1\}^2$. Finally, fixing $k\!\in\!\N$, let us 
inductively define a set $T_{p,k}(f)$ of pairs 
$(f_{i,\zeta},k_{i,\zeta})\!\in\!\Z[\bx]\times \N$ as follows: We set 
$(f_{0,0},k_{0,0})\!:=\!(f,k)$. Then, for any $i\!\geq\!1$ with 
$(f_{i-1,\mu},k_{i-1,\mu})\!\in\!T_{p,k}(f)$ and any {\em singular} point 
$\zeta_{i-1}\!\in \zpt$ of $\tilde{f}_{i-1,\mu}$ with 
$s_{i-1}\!:=\!s(f_{i-1,\mu},\zeta_{i-1})\!\in\!\{2,\ldots,k_{i-1,\mu}-1\}$, we 
define $\zeta:=\mu+p^{i-1}\zeta_{i-1}$, $k_{i,\zeta}:=k_{i-1,\mu}
-s_{i-1}$ and $f_{i,\zeta}(\bx):=                     
\left[\frac{1}{p^{s_{i-1}}}f_{i-1,\mu}(\zeta_{i-1}+p\bx)\right] \ \mathrm{mod} 
\ p^{k_{i,\zeta}}$. }
\end{dfn} 
Just as in the univariate case, the perturbations $f_{i,\zeta}$ of $f$ will 
help us keep track of how the points of $f$ in $\zpkt$ cluster, in a 
$p$-adic metric sene, about the points of $\tf$. It is clear that 
$\frac{D^{j_1,j_2}f(\bep)}{j_1!j_2!}$ is always an integer as the coefficient 
of $x_1^{j_1}x_2^{j_2}$ in the Taylor expansion of $f(\bx+\bep)$ about 
$\bx = (0,0)$. We will see in the next section how $\tpk(f)$ is associated 
with a natural tree structure. Moreover, $\tpk(f)$ is always a finite set by 
definition, as only $f_{i,\zeta}$ with $i\leq \floor{(k-1)/2}$ and 
$\zeta\in \zpt$ are possible. 

\begin{lem}\label{lem:nrec}{\em
Following the notation above, let $n_p(f)$ denote the number of smooth points 
of $\tf$ in $\zpt$. Then provided $k\geq 0$ and $\tf$ is not identically zero, we have  \[N_{p,k}(f) \ = \ p^{k-1}n_p(f) 
	+ \left(\sum\limits_{\substack{\zeta_0\in(\Z/p\Z)^2\\ 
			s(f,\zeta_0)\geq k}} p^{2(k-1)}\right) 
	+ \!\!\!\!\!\!\!  
	\sum\limits_{\substack{\zeta_0\in(\Z/p\Z)^2\\
			s(f,\zeta_0)\in\{2,\ldots,k-1\}}} 
	\!\!\!\!\!\!\! 
	p^{2(s(f,\zeta_0)-1)}N_{p,k-s(f,\zeta_0)}(f_{1,\zeta_0})
	.\]  }
\end{lem}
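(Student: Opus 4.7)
The plan is to partition the solutions $\{\bx \in \zpkt : f(\bx) \equiv 0 \pmod{p^k}\}$ by their reductions $\zeta_0 := \bx \bmod p \in \zpt$, and for each residue class $\zeta_0$, count lifts $\bx = \zeta_0 + p\mathbf{y}$ with $\mathbf{y} = (y_1, y_2) \in (\Z/p^{k-1}\Z)^2$. The integral Taylor expansion at $\zeta_0$ gives
\[
f(\zeta_0 + p\mathbf{y}) \;=\; \sum_{j_1, j_2 \geq 0} \frac{D^{j_1, j_2} f(\zeta_0)}{j_1!\, j_2!}\, p^{j_1 + j_2}\, y_1^{j_1} y_2^{j_2},
\]
so every monomial on the right has $p$-adic valuation at least $s := s(f, \zeta_0)$. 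I then split $\zpt$ into four regimes according to $s$ relative to $k$: namely $s = 0$, $s = 1$, $s \geq k$, and $2 \leq s \leq k-1$.

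If $s = 0$, then $f(\zeta_0) \not\equiv 0 \pmod p$ and no root reduces to $\zeta_0$. If $s = 1$, then $f(\zeta_0) \equiv 0 \pmod p$ and some $\partial_i f(\zeta_0) \not\equiv 0 \pmod p$, which (by Lemma~\ref{lem:func}) is exactly the smoothness condition $m_{\zeta_0}(\tf) = 1$. Assuming WLOG $\partial_1 f(\zeta_0) \not\equiv 0 \pmod p$, I apply Hensel/Newton lifting: for each of the $p^{k-1}$ choices of $y_2 \in \Z/p^{k-1}\Z$, the univariate polynomial $y_1 \mapsto f(\zeta_0 + p(y_1, y_2))$ has a simple root mod $p$ at $y_1 = 0$, which lifts uniquely to a root mod $p^k$. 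Summing over smooth points of $\tf$ gives exactly $p^{k-1} n_p(f)$.

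If $s \geq k$, every Taylor term has valuation $\geq k$, so every one of the $p^{2(k-1)}$ choices of $\mathbf{y}$ yields a solution, giving the second term. Finally, if $2 \leq s \leq k-1$, dividing through by $p^s$ shows that $f(\zeta_0 + p\mathbf{y}) \equiv 0 \pmod{p^k}$ is equivalent to $f_{1, \zeta_0}(\mathbf{y}) \equiv 0 \pmod{p^{k-s}}$, with $f_{1,\zeta_0}$ as in Definition~\ref{dfn:ntpk}. Since this condition depends only on $\mathbf{y} \bmod p^{k-s}$ and the reduction $(\Z/p^{k-1}\Z)^2 \to (\Z/p^{k-s}\Z)^2$ has uniform fibers of size $p^{2(s-1)}$, each root of $f_{1,\zeta_0}$ mod $p^{k-s}$ contributes exactly $p^{2(s-1)}$ lifts, yielding $p^{2(s-1)} N_{p,k-s}(f_{1,\zeta_0})$. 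Summing the disjoint contributions over all $\zeta_0 \in \zpt$ produces the stated identity. The main technical step is the smooth case: once one verifies that a single non-vanishing partial derivative suffices for bivariate Hensel lifting (which reduces cleanly to the univariate statement by fixing $y_2$), the remaining cases are straightforward fiber-counting arguments.
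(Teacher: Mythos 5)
Your overall strategy matches the paper's: partition lifts by their reduction $\zeta_0 \bmod p$, use the integral Taylor expansion at $\zeta_0$, and analyze by the valuation $s := s(f, \zeta_0)$. The regimes $s \geq k$ and $2 \leq s \leq k-1$ are handled correctly and exactly as the paper does. Your treatment of the smooth case is a mild variant of the paper's (you fix $y_2$ and apply univariate Hensel to $y_1 \mapsto f(\zeta_0 + p(y_1,y_2))/p$, whereas the paper proves a bivariate Hensel statement, Lemma~\ref{lem:nhensel}, and iterates it as Proposition~\ref{prp:smooth}); both are fine, though your phrase ``simple root mod $p$ at $y_1 = 0$'' is imprecise, since $g(y_1) := f(\zeta_0 + p(y_1,y_2))$ has derivative divisible by $p$ everywhere and the simple root of $g/p$ need not sit at $y_1 = 0$.

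The genuine gap is the implication ``$s = 1 \Rightarrow$ some $\partial_i f(\zeta_0) \not\equiv 0 \bmod p$,'' which is false. The minimum defining $s(f,\zeta_0)$ can be achieved by the $(j_1,j_2)=(0,0)$ term alone: if $\zeta_0$ is a \emph{singular} point of $\tf$ with $\ord_p f(\zeta_0) = 1$, then all first derivatives vanish mod $p$ yet $s(f,\zeta_0) = 1$. A concrete example (for $p$ odd) is $f = x_1^2 + x_2^2 + p$ at $\zeta_0 = (0,0)$: the origin is a node of $\tf = x_1^2 + x_2^2$, but $s = 1$. For such a $\zeta_0$ the Hensel step does not apply, and in fact $f(\zeta_0 + p\mathbf{y}) \equiv f(\zeta_0) \bmod p^2$ has valuation exactly $1$, so $\zeta_0$ contributes \emph{zero} lifts for $k \geq 2$ rather than $p^{k-1}$. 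The final identity is still correct, because such $\zeta_0$ are omitted from every summand on the right-hand side (they are not smooth, and $s \notin \{2,\ldots,k-1\}$ nor $s \geq k$ when $k \geq 2$), but your case analysis as written does not explain why they vanish from the count. The fix is either to split on smooth vs.\ singular first (as the paper does, then invoke Lemma~\ref{lem:nmul} to see that singular points with $s \leq 1$ have no lifts), or to add an explicit subcase ``$s = 1$ and $\zeta_0$ singular'' and observe it contributes $0$. Also, the smoothness characterization you want is Lemma~\ref{lem:alt}, not Lemma~\ref{lem:func}.
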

We will prove Lemma \ref{lem:nrec} in the next section, where it will be clear how Lemma \ref{lem:nrec} applies recursively. Then we show how Lemma \ref{lem:nrec} leads to our recursive algorithm for computing $N_{p,k}(f)$.

\section{Generalized Hensel Lifting and the Proof of our Main Recurrence}
Let us first prove the following alternative definition for multiplicity of a point on the curve. We will mainly use this definition for the rest of the discussion.

\begin{lem}\label{lem:alt}
{\em For any $\zeta\in \F_p^2$, $m:=\mzf$ is the smallest nonnegative integer such that there exists $j_1, j_2\geq 0$ with $j_1+j_2 = m$, and $D^{j_1,j_2}f(\zeta)\neq 0\mod p$. }
\end{lem}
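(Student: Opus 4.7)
The plan is to translate to $\zeta = (0,0)$ and then read the multiplicity directly off of the Taylor expansion of $f$, tracking which coefficients do or do not vanish mod $p$.

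First, I would reduce to $\zeta = (0,0)$. By the definition $m_\zeta(\tf) := m_{(0,0)}(\tf^T)$ with $T(\bx) := \bx + \zeta$, and the chain-rule identity $D^{j_1,j_2}(f \circ T)(0,0) = D^{j_1,j_2}f(\zeta)$ (a shift in the point of evaluation is all a translation does to the partial derivatives), both the multiplicity and the numerical condition on partials transport under replacing $(f,\zeta)$ by $(f \circ T, (0,0))$. So I may assume $\zeta = (0,0)$.

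Next, at the origin I would write out the integral Taylor expansion
\[
f(\bx) \ = \ \sum_{j_1, j_2 \geq 0} \frac{D^{j_1,j_2}f(0,0)}{j_1! j_2!}\, x_1^{j_1} x_2^{j_2}
\]
inside $\Z[\bx]$ --- each coefficient is an integer, as noted in the excerpt following Definition \ref{dfn:ntpk}. Reducing modulo $p$ and collecting by total degree recovers the decomposition $\tf = \sum_i g_i$, where $g_i \in \F_p[\bx]$ is the homogeneous component of degree $i$. By the $\F_p$-linear independence of monomials, $g_i$ is the zero form precisely when every Taylor coefficient with $j_1 + j_2 = i$ vanishes mod $p$. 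By the original definition, $m_{(0,0)}(\tf)$ is the least $i$ at which $g_i \neq 0$; combining with the previous equivalence yields the claimed characterization.

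The only subtle point is reconciling the Taylor coefficient $D^{j_1,j_2}f(\zeta)/(j_1! j_2!)$ with the unnormalized $D^{j_1,j_2}f(\zeta)$ that appears in the lemma statement. These two quantities agree modulo $p$ whenever $p \nmid j_1! j_2!$, which is automatic at all relevant indices when $m < p$; in the remaining cases the integer-valued Taylor coefficient is really the object controlling the vanishing of $g_i$. I do not expect any further obstacle: once the translation is carried out, the argument is essentially bookkeeping, matching the definition of $m_\zeta$ to the monomial expansion of $\tf$.
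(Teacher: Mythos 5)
Your proof is correct and follows essentially the same route as the paper: translate $\zeta$ to the origin, then match the homogeneous components $g_i$ of $\tf$ against the Taylor coefficients of $f$ reduced mod $p$. The paper's own version is slightly more concrete --- it isolates a single nonzero monomial $a_r x_1^r x_2^{m-r}$ of degree $m$, shows all Hasse derivatives of order $<m$ vanish at the origin, and exhibits $(j_1,j_2)=(r,m-r)$ as a witness with $D^{r,m-r}\tf(0,0)=a_r\neq 0$ --- whereas you argue directly at the level of the whole degree-$i$ layer $g_i$; both are the same computation in different clothing.

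The normalization worry you flag at the end is in fact a real imprecision in the paper, and your instinct about how to resolve it is right. The text defines $D^{j_1,j_2}$ as the ordinary partial derivative, under which reading the lemma is literally false for multiplicities $\geq p$ (take $f=x_1^p$ at the origin: $m_{(0,0)}(\tf)=p$, yet every ordinary derivative of total order $p$ is $p!\equiv 0 \bmod p$). But the paper's proof silently switches to the Hasse derivative --- the displayed identity $D^{j_1,j_2}(a_r x_1^r x_2^{m-r})=a_r\binom{r}{r-j_1}\binom{m-r}{m-r-j_2}x_1^{r-j_1}x_2^{m-r-j_2}$ is the Hasse formula, not the ordinary one, and Lemma \ref{lem:nmul} likewise says ``$m$-th Hasse derivative'' --- so the quantity really being tested is your integer Taylor coefficient $D^{j_1,j_2}f(\zeta)/(j_1!j_2!)$. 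With that reading your argument closes cleanly with no remaining gap; it would be worth stating explicitly (rather than ``I do not expect any further obstacle'') that the lemma should be read with Hasse derivatives and that your homogeneous-component argument then gives the equivalence in full generality, with no need for the $m<p$ caveat.
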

\begin{proof}
Fix $\zeta\in \F_p^2$, and let $T$ be the translation that takes $(0,0)$ to $\zeta$. Then for any $j_1, j_2\geq 0$, $D^{j_1,j_2}\tf^T(0,0) = D^{j_1,j_2}\tf(\zeta)$. So it suffices to prove the statement for the case when $\zeta = (0,0)$. 

Suppose $\tf = g_m + g_{m+1} + \cdots + g_n$, where $g_i$ is a homogeneous form in $\F_p[\bx]$ of degree $i$ and $g_m\neq 0$. Then $\tf$ must have a nonzero monomial term $a_rx_1^rx_2^{m-r}$, for some integer $r\leq m$, and $a_r\in \F_p^{\times}$. Note that as $h_m \in \F_p[\bx]$, we must have $r, m-r < p$ as well. 
Then for any $j_1, j_2\geq 0$, we have 
$D^{j_1,j_2}\left( a_rx_1^rx_2^{m-r} \right) = a_r{r\choose r-j_1}{m-r \choose m-r-j_2}x_1^{r-j_1}x_2^{m-r-j_2}$. 
It is obvious that for any pair of nonnegative integers $j_1,j_2$ with $j_1+j_2<m$, either $r-j_1 > 0$ or $m-r-j_2>0$. Moreover, any other nonzero monomial term $a_tx_1^{t_1}x_2^{t_2}$ of $\tf$ must have $t_1 + t_2 \geq m$ and $t_1\geq r$ or $t_2\geq m-r$. Hence $t_1-j_1 > 0$ or $t_2-j_2 > 0$. So for such a pair of $j_1, j_2$, we must have $D^{j_1,j_2}\tf(0,0) = 0\mod p$. Now take $j_1 = r$ and $j_2 = m-r$, then 
\begin{align*}
D^{j_1,j_2}\tf(0,0) = a_r{r\choose r-j_1}{m-r \choose m-r-j_2}\neq 0 \mod p.
\end{align*}

Conversely, if $m$ is the smallest nonnegative integer such that there exists $j_1,j_2\geq 0$ with $j_1+j_2 = m$ and $D^{j_1,j_2}f(0,0)\neq 0\mod p$, then there exists $a_jx_1^{j_1}x_2^{j_2}$ a nonzero monomial term of $\tf$ of smallest total degree. So $m = m_{(0,0)}(\tf)$.
\end{proof}

The classical Hensel's Lemma (see, e.g., \cite[Thm.\ 2.3, Pg.\ 87]{nzm91}) says 
that any {\it non-degenerate} root of a univariate polynomial in $\zp$ lifts 
uniquely into any larger prime power ring $\zpk$. One expects similar nice 
behavior from a smooth point on a curve over $\zp$. We prove the following 
analogue of Hensel's Lemma for curves in the Appendix:

\begin{lem}\label{lem:nhensel}
{\em Let $f(\bx)\in \Z[\bx]$. If $f(\sigma) \equiv 0 \mod p^j$ for $j\geq 1$, 
and $\left( \zeta^{(0)} \equiv \sigma \mod p\right) $ is a smooth point on 
$\tf$, then there are exactly $p$ many $\bt\in (\Z/p\Z)^2$ such that 
$f(\sigma+p^j\bt) \equiv 0\mod p^{j+1}$.}
\end{lem}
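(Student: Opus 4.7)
The plan is to exploit the bivariate Taylor expansion of $f$ about $\sigma$ modulo $p^{j+1}$ and reduce the lifting question to a single affine-linear equation over $\F_p$. Writing out
\[
f(\sigma + p^j \bt) \ = \ \sum_{j_1,j_2\geq 0} \frac{D^{j_1,j_2}f(\sigma)}{j_1!\,j_2!}\,(p^j t_1)^{j_1}(p^j t_2)^{j_2},
\]
every term with $j_1+j_2\geq 2$ carries a factor of $p^{2j}$, and since $j\geq 1$ forces $2j\geq j+1$, all such terms vanish modulo $p^{j+1}$. So the key congruence is
\[
f(\sigma + p^j\bt) \ \equiv \ f(\sigma) + p^j\bigl(D^{1,0}f(\sigma)\,t_1 + D^{0,1}f(\sigma)\,t_2\bigr) \pmod{p^{j+1}}.
\]

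Next I would use the hypothesis $f(\sigma)\equiv 0\pmod{p^j}$ to write $f(\sigma)=p^j c$ for some $c\in\Z$, so that $f(\sigma+p^j\bt)\equiv 0\pmod{p^{j+1}}$ is equivalent to the affine-linear condition
\[
c + D^{1,0}f(\sigma)\,t_1 + D^{0,1}f(\sigma)\,t_2 \ \equiv \ 0 \pmod{p}
\]
in the unknown $\bt=(t_1,t_2)\in(\Z/p\Z)^2$.

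The point where smoothness enters is in verifying that this linear equation is nondegenerate. Because $\zeta^{(0)}\equiv\sigma\pmod p$, we have $D^{j_1,j_2}f(\sigma)\equiv D^{j_1,j_2}\tf(\zeta^{(0)})\pmod p$ for every $j_1,j_2$. By Lemma \ref{lem:alt}, the assumption $m_{\zeta^{(0)}}(\tf)=1$ guarantees that at least one of $D^{1,0}\tf(\zeta^{(0)})$ or $D^{0,1}\tf(\zeta^{(0)})$ is nonzero in $\F_p$ (with $D^{0,0}\tf(\zeta^{(0)})=0$ since $\zeta^{(0)}$ lies on $\tf$). Hence the coefficient vector of the linear equation is a nonzero element of $\F_p^2$, so the solution set is an affine line in $(\Z/p\Z)^2$, containing exactly $p$ points.

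The only subtlety to double-check is the bookkeeping of the Taylor coefficients as integers (which is addressed in the comment following Definition \ref{dfn:ntpk}) and the identification of the reductions $D^{j_1,j_2}f(\sigma)\bmod p$ with $D^{j_1,j_2}\tf(\zeta^{(0)})$; beyond these routine points, no additional machinery is needed. I do not expect any serious obstacle: the essential content is simply that smoothness at $\zeta^{(0)}$ makes the Jacobian-type condition nonvanishing, so that each approximate root mod $p^j$ fans out into exactly $p$ lifts mod $p^{j+1}$, in direct analogy with the classical univariate Hensel lemma.
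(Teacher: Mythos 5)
Your proof is correct and follows the same route as the paper: Taylor-expand $f$ about $\sigma$, note that all terms of total order $\geq 2$ vanish mod $p^{j+1}$ since $2j\geq j+1$, and reduce the lift condition to a nondegenerate affine-linear equation over $\F_p$ whose solution set is a line with $p$ points, with smoothness (via Lemma \ref{lem:alt}) supplying the nonvanishing of the gradient. Your invocation of Lemma \ref{lem:alt} to justify the nonzero partial derivative is a slightly more explicit version of what the paper asserts directly from the definition of a smooth point, but it is the same argument.
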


For $k>j\geq 1$ and any $\sigma^{(j)}\in (\Z/p^j\Z)^2$ such that $f(\sigma^{(j)})\equiv 0\mod p^j$, we call $\sigma^{(k)}\in (\Z/p^k\Z)^2$ a {\it lift} of $\sigma^{(j)}$, if $f(\sigma^{(k)})\equiv 0\mod p^k$ and $\sigma^{(k)} \equiv \sigma^{(j)}\mod p^j$. Then by applying Lemma \ref{lem:nhensel} inductively, we obtain:
\begin{prp} \label{prp:smooth}
	{\em 
	Let $f(\bx)\in \Z[\bx]$, and $k> j\geq 1$. If $f(\sigma^{(j)})\equiv 0\mod p^j$, and $(\sigma^{(j)}\mod p)$ is a smooth point of $\tf$, then $\sigma^{(j)}$ lifts to exactly $p^{k-j}$ many roots of $(f\mod p^k)$.}
\end{prp}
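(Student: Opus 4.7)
The plan is a straightforward induction on $k-j$, using Lemma \ref{lem:nhensel} as both the base case and the inductive step.

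For the base case $k=j+1$, the statement is literally Lemma \ref{lem:nhensel}: we obtain exactly $p = p^{k-j}$ many $\bt\in\zpt$ such that $\sigma^{(j)}+p^j\bt$ lies in the zero set of $(f \bmod p^{j+1})$, and these parameterize the lifts of $\sigma^{(j)}$ to $(\Z/p^{j+1}\Z)^2$.

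For the inductive step, assume the result for the pair $(k-1,j)$, so that $\sigma^{(j)}$ admits exactly $p^{k-1-j}$ lifts to roots of $(f \bmod p^{k-1})$. The key observation is that every such intermediate lift $\sigma^{(k-1)}$ satisfies $\sigma^{(k-1)}\equiv \sigma^{(j)}\bmod p$, since $k-1\geq j\geq 1$. Hence $(\sigma^{(k-1)}\bmod p)$ coincides with $(\sigma^{(j)}\bmod p)$ and is in particular a smooth point of $\tf$. I can therefore apply Lemma \ref{lem:nhensel} to each such $\sigma^{(k-1)}$ (with $j$ replaced by $k-1$) to conclude that it lifts to exactly $p$ roots of $(f\bmod p^k)$.

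The final tally is obtained by noting that every lift of $\sigma^{(j)}$ to a root of $(f\bmod p^k)$ reduces mod $p^{k-1}$ to a unique lift of $\sigma^{(j)}$ to a root of $(f\bmod p^{k-1})$, giving a surjection whose fibers each have size $p$ by the previous paragraph. Multiplying yields exactly $p^{k-1-j}\cdot p = p^{k-j}$ lifts, as claimed. There is no real obstacle here; the only point to check is that the smoothness hypothesis on $\tf$ is preserved along the chain of intermediate lifts, which is automatic since all of them share the same residue in $\F_p^2$.
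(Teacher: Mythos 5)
Your proof is correct and takes exactly the approach the paper intends: the paper simply states that Proposition \ref{prp:smooth} follows ``by applying Lemma \ref{lem:nhensel} inductively,'' and your write-up is a careful unpacking of that induction on $k-j$, including the (automatic but worth noting) verification that smoothness of the residue mod $p$ persists along the chain of intermediate lifts.
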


\begin{lem} \label{lem:nmul}
{\em Following the notation above, suppose instead $\zeta^{(0)}\in (\zp)^2$ is a 
point on $\tf$ of (finite) multiplicity $m \geq 2$. Suppose also that $k\geq 2$ 
and that there is a $\sigma^{(k)}\in (\zpk)^2$ with $\sigma^{(k)}\equiv \zeta^{(0)}\mod p$ and $f(\sigma^{(k)}) =0\mod p^k$. Then $s(f,\zeta^{(0)})\in \{2,\ldots,m\}$.}
\end{lem}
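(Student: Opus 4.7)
The plan is to verify both bounds of $s(f,\zeta^{(0)})\in\{2,\ldots,m\}$ separately. The upper bound $s(f,\zeta^{(0)})\leq m$ follows directly from Lemma \ref{lem:alt}: choose $(j_1,j_2)$ with $j_1+j_2 = m$ and $D^{j_1,j_2}f(\zeta^{(0)}) \not\equiv 0 \pmod p$, so that $\frac{D^{j_1,j_2}f(\zeta^{(0)})}{j_1!j_2!}$ is also a $p$-unit (the $j_1!j_2!$ denominator cannot contribute to its $p$-adic valuation, as already observed right after Definition \ref{dfn:ntpk}). This pair contributes $m+0=m$ to the minimum defining $s(f,\zeta^{(0)})$, giving $s(f,\zeta^{(0)})\leq m$.

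For the lower bound $s(f,\zeta^{(0)})\geq 2$, I would partition the minimum into three cases according to $j_1+j_2$. If $j_1+j_2 \geq 2$, then $j_1+j_2+\ord_p(\cdot)\geq 2$ trivially. If $j_1+j_2 = 1$, then by Lemma \ref{lem:alt} and $m\geq 2$ the corresponding first partial derivative $D^{j_1,j_2}f(\zeta^{(0)})$ vanishes mod $p$, giving a contribution $\geq 1+1 = 2$. The only substantive case is $(j_1,j_2) = (0,0)$, where I must show $\ord_p f(\zeta^{(0)}) \geq 2$, and this is precisely where the hypothesis on $\sigma^{(k)}$ is needed.

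For that $(0,0)$ case I would lift $\sigma^{(k)}$ and $\zeta^{(0)}$ to integer representatives, write $\sigma^{(k)} = \zeta^{(0)} + p\tau$ for some $\tau\in\Z^2$, and expand
\[
f(\sigma^{(k)}) \;=\; f(\zeta^{(0)}) \;+\; p\!\left(\frac{\partial f}{\partial x_1}(\zeta^{(0)})\tau_1 + \frac{\partial f}{\partial x_2}(\zeta^{(0)})\tau_2\right) \;+\; \sum_{j_1+j_2\geq 2}\frac{D^{j_1,j_2}f(\zeta^{(0)})}{j_1!j_2!}\,p^{j_1+j_2}\,\tau_1^{j_1}\tau_2^{j_2}.
\]
Since $m\geq 2$, both first partials of $f$ at $\zeta^{(0)}$ vanish mod $p$, placing the linear term in $p^2\Z$; the tail sum is visibly in $p^2\Z$; and $f(\sigma^{(k)})\in p^k\Z\subseteq p^2\Z$ by hypothesis (using $k\geq 2$). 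Hence $f(\zeta^{(0)})\in p^2\Z$, i.e., $\ord_p f(\zeta^{(0)})\geq 2$.

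Combining the three cases yields $s(f,\zeta^{(0)})\geq 2$, which together with the upper bound gives $s(f,\zeta^{(0)})\in\{2,\ldots,m\}$. The only point requiring care is verifying that the $j_1!j_2!$ denominators in the definition of $s$ do not shift $p$-adic valuations in the upper-bound step, but this is handled by the same integrality observation already used in Lemma \ref{lem:alt}; no further obstacle arises.
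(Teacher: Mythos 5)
Your proof is correct and follows essentially the same route as the paper: the upper bound via Lemma \ref{lem:alt} and the existence of a nonvanishing order-$m$ (Hasse) derivative, and the lower bound via the Taylor expansion of $f(\zeta^{(0)}+p\tau)$ together with the vanishing of the first partials mod $p$. Your version is actually slightly more complete than the paper's, which establishes $\ord_p f(\zeta^{(0)})\geq 2$ and then leaves the three-case decomposition of the minimum defining $s(f,\zeta^{(0)})$ implicit, and which also glosses over the $j_1!j_2!$ denominator in the upper-bound step that you address explicitly.
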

\begin{proof}
	As $\zeta^{(0)}$ is a singular point on $\tf$, then $\frac{\partial f}{\partial x_i}(\zeta^{(0)})=0\mod p$ for every $i=1,\ldots,n$. Then for $\sigma^{(k)} = \zeta^{(0)}+p\tau \in (\zpk)^2$ with $\tau:=(\tau_1,\tau_2)\in (\Z/p^{k-1}\Z)^2$,
	{\small \begin{align}\label{eq:mul}
		f(\sigma^{(k)}) = f(\zeta^{(0)}) + p\left( \frac{\partial f}{\partial x_1}(\zeta^{(0)})\tau_1 + \frac{\partial f}{\partial x_2}(\zeta^{(0)})\tau_2\right) + \sum_{i_1+i_2\geq 2}p^{i_1+i_2} D^{i_1+i_2}f(\zeta^{(0)})\tau_1^{i_1} \tau_2^{i_2}
		\end{align}}
	to have solutions mod $p^k$, we need $f(\zeta^{(0)}) \equiv 0\mod p^2$, as the second and the third summand in equation (\ref{eq:mul}) has $p$-adic order at least $2$. 
	
	As $\zeta^{(0)}$ is a singular point of multiplicity $m$ on $\tf$, there exists an $m$-th Hasse derivative: $D^{j_1, j_2}f(\zeta^{(0)})\neq 0\mod p$ with $j_1+j_2=m$. So $s(f,\zeta^{(0)}) \leq \ord_p\left( p^{j_1+j_2}D^{j_1,j_2}f(\zeta^{(0)})\right) =m$.
\end{proof}

We can now relate $N_{p,k}(f)$ to the recursive structure on $T_{p,k}(f)$.

\medskip 
\noindent 
{\bf Proof of Lemma \ref{lem:nrec}:} 
The lifting of smooth points of $\tf$ follows from Proposition \ref{prp:smooth}.

Now assume that $\zeta_0\in (\zp)^2$ is a singular point of $\tf$. Write $\zeta := \zeta_0+p\sigma$ for $\sigma:=\zeta_1+p\zeta_2+\cdots+p^{k-2}\zeta_{k-1} \in 
(\zpk)^2$, and let $s:=s(f,\zeta_0)$. Note that by Lemma \ref{lem:nmul}, 
$s\geq 2$. Then by definition, $f(\zeta) = p^sf_{1,\zeta_0}(\sigma)$, for 
$f_{1,\zeta_0}\in \Z[\bx]$ and $f_{1,\zeta_0}$ does not vanish identically mod 
$p$.

If $s\geq k$, then $f(\zeta)=0\mod p^k$ regardless of choice of $\sigma$. So there are exactly $p^{2(k-1)}$ values of $\zeta\in (\zpk)^2$ such that $\zeta \equiv \zeta_0\mod p$ and $f(\zeta) = 0\mod p^k$.

If $s\leq k-1$ then $\zeta$ is a root of $f$ if and only if 
$f_{1,\zeta_0} (\sigma) \equiv 0 \mod p^{k-s}$. 
But then $\sigma = \zeta_1+p\zeta_2+\ldots+p^{k-s-1}\zeta_{k-s} \mod p^{k-s}$, 
i.e., the rest of the base $p$ digits $\zeta_{k-s+1},\ldots,\zeta_{k-1}$ do not 
appear in the preceding mod $p^{k-s}$ congruence. So the number of possible lifts 
$\zeta$ of $\zeta_0$ is exactly $p^{2(s-1)}$ times the number of roots 
$(\zeta_1+p\zeta_2+\ldots+p^{k-s-1}\zeta_{k-s}) \in (\Z/p^{k-s}\Z)^2$ of 
$f_{1,\zeta_0}$. This accounts for the third summand in our formula.  \qed

\begin{rem}{\em
The algebraic preliminaries we concluded in this section and Definition \ref{dfn:ntpk} can be extended transparently for point counting for hypersurfaces of arbitrary dimensions. \dia }
\end{rem}

\section{Bounding Sums of Multiplicities on Curves with at Worst Isolated Singularities}
Suppose $F \in \F_p[\bx]$ is a nonconstant polynomial of total degree $D$. Then $F$ factors into a product of irreducible components $F = \prod_{i=1}^{l} F_i^{e_i}\in \F_p[\bx]$ where each $F_i\in \F_p[\bx]$ is irreducible, and $e_i\geq 1$. We say $F$ is {\em squarefree} if $e_i = 1$ for every $i$. Suppose $G = \prod_{j=1}^{m}G_j^{c_i} \in \F_p[\bx]$ with $G_i\in \F_p[\bx]$ irreducible and $c_i\geq 1$. We say $F$ and $G$ {\em have no common component}, if $F_i \neq G_j$ for every pair of $i,j$. 
\begin{lem}(Corollary of B{\'e}zout's Theorem)\label{lem:bezout}
{\em
	Let $F, G\in \F_p[\bx]$ be two curves with no common components, then
$\sum_{\zeta} m_\zeta(F)m_\zeta(G)\leq \deg(F)\deg(G)$. } 
\end{lem}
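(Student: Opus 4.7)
The plan is to deduce the inequality from the classical B\'ezout theorem in $\mathbb{P}^2$, combined with the standard local lower bound for intersection multiplicities in terms of point multiplicities. First I would pass to the projective closures $\bar F,\bar G \in \F_p[x_0,x_1,x_2]$, homogeneous of degrees $\deg F$ and $\deg G$. Since $F$ and $G$ share no common affine irreducible factor, and homogenization respects irreducibility (the only new homogeneous irreducible factor that can appear is $x_0$, which is not shared), $\bar F$ and $\bar G$ have no common component in $\F_p[x_0,x_1,x_2]$. B\'ezout's theorem over $\bar{\F}_p$ then gives
\[
\sum_{\zeta \in \mathbb{P}^2(\bar{\F}_p)} I_\zeta(\bar F,\bar G) \;=\; \deg F \cdot \deg G,
\]
where $I_\zeta$ denotes the intersection multiplicity, defined as $\dim_{\bar{\F}_p}\cO_{\mathbb{P}^2,\zeta}/(\bar F,\bar G)$.

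Next I would invoke the classical inequality $I_\zeta(\bar F,\bar G) \geq m_\zeta(\bar F)\cdot m_\zeta(\bar G)$, valid at every point $\zeta$ (with equality precisely when $\bar F$ and $\bar G$ share no tangent direction at $\zeta$); see, e.g., Fulton's \emph{Algebraic Curves}, Ch.\ 3. For $\zeta$ in the affine chart $\{x_0 \neq 0\}$, the projective multiplicity $m_\zeta(\bar F)$ agrees with the affine multiplicity $m_\zeta(F)$ coming from the lowest-degree homogeneous piece of the Taylor expansion (i.e., the notion already used in Lemma~\ref{lem:alt}), and similarly for $G$. Dropping terms at infinity (which are all nonnegative) and using that the sum in the statement, whether taken over $\F_p^2$ or $\bar{\F}_p^2$, is dominated term-by-term by the corresponding projective sum yields
\[
\sum_{\zeta} m_\zeta(F)\cdot m_\zeta(G) \;\leq\; \sum_{\zeta \in \mathbb{P}^2(\bar{\F}_p)} I_\zeta(\bar F,\bar G) \;=\; \deg F \cdot \deg G,
\]
which is the claim.

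The main obstacle is the local inequality $I_\zeta(\bar F,\bar G) \geq m_\zeta(\bar F) m_\zeta(\bar G)$. If a self-contained derivation is preferred over citation, one can translate $\zeta$ to the origin, write $F = F_m+F_{m+1}+\cdots$ and $G = G_n+G_{n+1}+\cdots$ with $F_m,G_n$ the nonzero lowest-degree homogeneous pieces (so $m=m_\zeta(F)$, $n=m_\zeta(G)$), and exhibit $mn$ elements — e.g., the monomials $x_1^a x_2^b$ with $a<m$, $b<n$ — that remain linearly independent in $\cO_{\mathbb{P}^2,\zeta}/(F,G)$, using that $F$ and $G$ have no common factor in the local ring. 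Given the elementary tone of the surrounding section and the fact that this lemma is used only as a black-box combinatorial bound later, citing Fulton and moving on is the cleanest option.
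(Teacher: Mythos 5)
The paper states Lemma~\ref{lem:bezout} without proof, treating it as a known corollary of B\'ezout's theorem, so there is no in-paper argument to compare against; your proposal supplies one. Your main route --- homogenize to $\bar F,\bar G$, observe they have no common projective component, apply B\'ezout over $\bar{\F}_p$ in $\mathbb{P}^2$, invoke the local inequality $I_\zeta(\bar F,\bar G)\geq m_\zeta(\bar F)\,m_\zeta(\bar G)$, and then drop the nonnegative contributions at infinity and at non-rational points --- is the standard one and is correct; the reference to Fulton (\emph{Algebraic Curves}, Ch.~3, Property~(5) of intersection numbers) is exactly the right citation.

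One caveat concerns your optional self-contained sketch at the end: the specific $mn$ monomials $x_1^a x_2^b$ with $a<m$, $b<n$ need not remain linearly independent in the local quotient. Take $F=x_2$ and $G=x_2^2-x_1^3$, so $m=1$, $n=2$; then $(F,G)=(x_2,x_1^3)$, the local quotient is $\bar{\F}_p[x_1]/(x_1^3)$, and $x_2$ maps to $0$, so $\{1,x_2\}$ are dependent --- even though $\dim=3\geq mn=2$ and the inequality certainly holds. Fulton's actual argument is a dimension count on the exact sequence
\[
\bar{\F}_p[\bx]/I^n \oplus \bar{\F}_p[\bx]/I^m \longrightarrow \bar{\F}_p[\bx]/I^{m+n} \longrightarrow \bar{\F}_p[\bx]/(I^{m+n},F,G) \longrightarrow 0,
\]
with $I$ the maximal ideal at the origin and the first map $(\bar A,\bar B)\mapsto \overline{AF+BG}$, giving $\dim \bar{\F}_p[\bx]/(I^{m+n},F,G)\geq \binom{m+n+1}{2}-\binom{m+1}{2}-\binom{n+1}{2}=mn$; it does not exhibit explicit surviving monomials. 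Since you explicitly recommend citing Fulton rather than leaning on the sketch, this does not undermine your proof, but the sketch as phrased would not yield a valid self-contained derivation.
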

Now let $F' = \prod_{i=1}^{l}F_i\in \F_p[\bx]$ be the square-free part of $F$. We say a singular point $\zeta$ on $F$ is {\it an isolated singular point} if $\zeta$ is also singular on $F'$, and call it {\it a non-isolated singular point} if otherwise. 
\begin{lem}\label{lem:d-1}
	{\em
	Let $F\in \F_p[\bx]$ be a curve with degree $d$, and let $F'$ denote the square-free part of $F$. Then 
	\begin{align*}
	\sum_{\zeta} m_\zeta(F')\left(m_\zeta(F')-1\right) \leq d(d-1)
	\end{align*}
	In particular, $F$ has at most ${d \choose 2}$ many isolated singular points. }
\end{lem}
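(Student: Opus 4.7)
The plan is to apply B\'ezout's theorem (Lemma \ref{lem:bezout}) to $F'$ and its partial derivatives, using the elementary fact that if $m_\zeta(G) = m \geq 1$ for some $G \in \F_p[\bx]$, then $m_\zeta(\partial G/\partial x_j) \geq m-1$ (differentiation drops the lowest nonvanishing degree in the Taylor expansion at $\zeta$ by exactly one). In characteristic zero one could simply intersect $F'$ with $\partial F'/\partial x_1$ to obtain $\sum_\zeta m_\zeta(F')(m_\zeta(F')-1) \leq D(D-1) \leq d(d-1)$, where $D := \deg F'$. The hard part will be handling the mod-$p$ subtlety that $F'$ need not be coprime to $\partial F'/\partial x_1$: an irreducible factor $F_i$ of $F'$ with $\partial F_i/\partial x_1 = 0$ (i.e.\ $F_i \in \F_p[x_1^p, x_2]$) divides $\partial F'/\partial x_1$.

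To circumvent this, I would split $F' = F_{(1)} \cdot F_{(2)}$, where $F_{(1)}$ collects the irreducible factors $F_i$ of $F'$ with $\partial F_i/\partial x_1 \neq 0$ and $F_{(2)}$ collects the rest; write $D_j = \deg F_{(j)}$, so $D_1 + D_2 = D$. Since no nonconstant irreducible polynomial over $\F_p$ can have both partial derivatives vanishing (else it would be a $p$-th power), each factor of $F_{(2)}$ has $\partial F_i/\partial x_2 \neq 0$. Using squarefreeness of $F'$, one checks that $F_{(1)}$ and $\partial F_{(1)}/\partial x_1$ share no common irreducible factor: any common factor $F_i$ would divide $(\partial F_i/\partial x_1)\cdot (F_{(1)}/F_i)$, and coprimality of $F_i$ with $F_{(1)}/F_i$ would force $F_i \mid \partial F_i/\partial x_1$, impossible by degrees once $\partial F_i/\partial x_1 \neq 0$. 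The same argument also shows $\partial F_{(1)}/\partial x_1$ does not vanish identically. Symmetrically, $F_{(2)}$ and $\partial F_{(2)}/\partial x_2$ share no common factor.

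Three applications of Lemma \ref{lem:bezout} then yield
\begin{align*}
\sum_\zeta m_\zeta(F_{(1)})\, m_\zeta(\partial F_{(1)}/\partial x_1) &\leq D_1(D_1-1),\\
\sum_\zeta m_\zeta(F_{(2)})\, m_\zeta(\partial F_{(2)}/\partial x_2) &\leq D_2(D_2-1),\\
\sum_\zeta m_\zeta(F_{(1)})\, m_\zeta(F_{(2)}) &\leq D_1 D_2.
\end{align*}
The multiplicity inequality $m_\zeta(\partial F_{(j)}/\partial x_j) \geq m_\zeta(F_{(j)}) - 1$ (trivial when $m_\zeta(F_{(j)}) = 0$) upgrades the first two lines to bounds on $\sum_\zeta m_\zeta(F_{(j)})(m_\zeta(F_{(j)})-1)$. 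Combining with the algebraic identity
\[
m_\zeta(F')(m_\zeta(F')-1) \;=\; \sum_{j=1}^{2} m_\zeta(F_{(j)})(m_\zeta(F_{(j)})-1) \;+\; 2\, m_\zeta(F_{(1)})\, m_\zeta(F_{(2)}),
\]
which follows from $m_\zeta(F') = m_\zeta(F_{(1)}) + m_\zeta(F_{(2)})$ (Lemma \ref{lem:func}), and summing over $\zeta$ gives $\sum_\zeta m_\zeta(F')(m_\zeta(F')-1) \leq D_1(D_1-1) + D_2(D_2-1) + 2 D_1 D_2 = D(D-1) \leq d(d-1)$. For the ``in particular'' clause: every isolated singular point has $m_\zeta(F') \geq 2$ and hence contributes at least $2$ to the left-hand side, bounding the number of such points by $d(d-1)/2 = \binom{d}{2}$.
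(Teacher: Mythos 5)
Your proof is correct, and it is in fact more careful than the proof given in the paper. The paper argues by asserting that squarefreeness of $F'$ alone guarantees that $F'$ and $D^{1,0}F' = \partial F'/\partial x_1$ share no common component, and then applies Lemma~\ref{lem:bezout} once. Over $\F_p$ this assertion can fail: if $F'$ has an irreducible factor $F_i\in\F_p[x_1^p,x_2]$ (for instance $F_i = x_2 - x_1^p$), then $\partial F_i/\partial x_1 = 0$, so $F_i$ divides $\partial F'/\partial x_1$ and the two polynomials do share the component $F_i$; indeed $\partial F'/\partial x_1$ might vanish identically, in which case B\'ezout cannot even be invoked. Your split $F' = F_{(1)}F_{(2)}$ according to which partial derivative survives on each irreducible factor, together with the observation that no nonconstant irreducible over the perfect field $\F_p$ can have both partials vanish (else it would be a $p$-th power), circumvents exactly this characteristic-$p$ obstruction. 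Three applications of Lemma~\ref{lem:bezout}, the multiplicity drop $m_\zeta(\partial G/\partial x_j)\ge m_\zeta(G)-1$, the additivity $m_\zeta(F')=m_\zeta(F_{(1)})+m_\zeta(F_{(2)})$ from Lemma~\ref{lem:func}, and the identity $(a+b)(a+b-1)=a(a-1)+b(b-1)+2ab$ then give $D(D-1)\le d(d-1)$. So you use the same two core ingredients as the paper (B\'ezout and the multiplicity drop), but your decomposition is what actually makes the argument valid in positive characteristic, and it patches a genuine gap in the published proof. (One could alternatively repair the paper's argument by a generic linear change of coordinates, but that is not what is written there.)
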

\begin{proof}
	As $F'$ is squarefree, then $F'$ and $D^{1,0}F'(\bx)$ have no common component. It is also easy to deduct from Lemma \ref{lem:alt} that for any $\zeta\in \F_p^2$, $m_\zeta(D^{1,0}F') \geq m_\zeta(F') - 1$. The conclusion thus follows by applying Lemma \ref{lem:bezout}, and that $m_{\zeta}(F') \geq 2$ for any isolated singular points of $F$.
\end{proof}

Suppose $F = \prod_{i=1}^{l} F_i^{e_i}\in \F_p[\bx]$ is a nonconstant polynomial. For each $i$, let $d_i:=\deg(F_i)$ and let $d := \sum d_i^{e_i}$ be the total degree of $F$. Let $I\subseteq\{1,\ldots,l\}$ be an nonempty subset of indices, and let $S_I$ denote the set of points in the intersection $\bigcap_{i\in I}F_i$, and let $T_I = \{\zeta\in S_I: \zeta \text{ is smooth on } F_i \text{ for all }i\in I\}$. 

We then prove the following more generalized statement of Lemma \ref{lem:d-1} 
in the Appendix:
\begin{lem}\label{prp:summul}
{\em	Using the notation above we have: 
\begin{equation}
\label{eq:isosing}
\sum\limits_{\substack{\zeta\in S_I\\ I\neq \emptyset}}  
m_\zeta(F)(m_{\zeta}(F) - \sum_{i\in I}e_i) + 
\sum\limits_{\substack{\zeta\in T_I\\|I|\geq 2}} 
	\!\! m_{\zeta}(F) \leq d(d-1).
	\end{equation}} 
\end{lem}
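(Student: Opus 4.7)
The plan is to bound each point $\zeta\in\F_p^2$'s contribution using the factorization $F=\prod_i F_i^{e_i}$, reducing the entire estimate to pairwise B\'ezout (Lemma~\ref{lem:bezout}) applied to the distinct irreducibles $(F_i,F_j)$ and to the single--polynomial estimate (Lemma~\ref{lem:d-1}) applied to each $F_i$. I read the double indexing in the statement so that each $\zeta$ contributes through the unique maximal $I=I(\zeta):=\{i:F_i(\zeta)=0\}$ (under any other reading, even two smoothly meeting lines already push the left--hand side past $d(d-1)$). Writing $m_i:=m_\zeta(F_i)$, $J:=I(\zeta)$, $A:=m_\zeta(F)=\sum_i e_im_i$ (by Lemma~\ref{lem:func}), and $P:=\sum_{i\in J}e_i$, the first sum's contribution at $\zeta$ equals $A(A-P)=A\sum_{i\in J}e_i(m_i-1)$. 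I would expand $A(A-P)$ into a diagonal and a cross part, bound the diagonal by Lemma~\ref{lem:d-1} applied to each (irreducible, hence squarefree) $F_i$, bound the cross part by B\'ezout on pairs $(F_i,F_j)$, and absorb the second sum $\Sigma_2$ into the leftover B\'ezout slack.

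Direct symmetrization yields
\begin{equation*}
A(A-P)\;=\;\sum_{i\in J}e_i^{\,2}\,m_i(m_i-1)\;+\sum_{\substack{i<j\\ i,j\in J}}e_ie_j\bigl(2m_im_j-m_i-m_j\bigr),
\end{equation*}
in which each cross term equals $m_i(m_j-1)+m_j(m_i-1)\geq 0$, vanishing precisely at points of $F_i\cap F_j$ smooth on both factors. Summing over $\zeta$ and swapping orders, the diagonal total is $\sum_i e_i^{\,2}\sum_\zeta m_i(m_i-1)\leq \sum_i e_i^{\,2}d_i(d_i-1)$ by Lemma~\ref{lem:d-1}. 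For each pair $i<j$, let $\alpha_{ij}$ count the points of $F_i\cap F_j$ smooth on both $F_i$ and $F_j$, and set $\beta_{ij}:=\sum m_im_j$ taken over the remaining points of $F_i\cap F_j$, so that Lemma~\ref{lem:bezout} gives $\alpha_{ij}+\beta_{ij}\leq d_id_j$. Since the cross term vanishes at $\alpha_{ij}$-points and is $\leq 2m_im_j$ at $\beta_{ij}$-points, the cross contribution to $\Sigma_1$ is at most $2\sum_{i<j}e_ie_j\,\beta_{ij}$. For $\Sigma_2$: at any qualifying $\zeta$ all $m_i$ with $i\in J$ equal $1$, whence $m_\zeta(F)=\sum_{i\in J}e_i\leq \sum_{i<j,\,i,j\in J}(e_i+e_j)$ (valid as $|J|\geq 2$), and distributing the bound to pairs yields $\Sigma_2\leq \sum_{i<j}(e_i+e_j)\alpha_{ij}$.

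The decisive step, which I expect to be the main obstacle (since this is where the smooth--intersection surplus in $\Sigma_2$ has to be paid for by the slack in $\Sigma_1$'s cross bound), is the pairwise inequality
\begin{equation*}
2e_ie_j\,\beta_{ij}+(e_i+e_j)\,\alpha_{ij}\;\leq\;2e_ie_j\,(\alpha_{ij}+\beta_{ij})\;\leq\;2e_ie_j\,d_id_j,
\end{equation*}
whose first $\leq$ amounts to $e_i+e_j\leq 2e_ie_j$, itself immediate from $e_i,e_j\geq 1$. Summing over pairs and adding the diagonal bound,
\begin{equation*}
\Sigma_1+\Sigma_2\;\leq\;\sum_i e_i^{\,2}d_i(d_i-1)+2\sum_{i<j}e_ie_j\,d_id_j\;=\;d^2-\sum_i e_i^{\,2}d_i\;\leq\;d^2-d\;=\;d(d-1),
\end{equation*}
where the equality uses $d=\sum_i e_id_i$ so that $d^2=\sum_i e_i^{\,2}d_i^{\,2}+2\sum_{i<j}e_ie_j\,d_id_j$, and the last $\leq$ uses $\sum_i e_i^{\,2}d_i\geq \sum_i e_id_i=d$, valid since $e_i^{\,2}\geq e_i$ for every $i$. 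This completes the plan.
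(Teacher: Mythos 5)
Your proof is correct, and it takes a genuinely different route from the paper's. The paper argues by induction on the number $l$ of distinct irreducible factors: at each step it peels off $F_l^{e_l}$, applies B\'ezout (Lemma \ref{lem:bezout}) once between the partial product $F'=\prod_{i<l}F_i^{e_i}$ and $F_l^{e_l}$, and does the bookkeeping over all subsets $J\subseteq\{1,\ldots,l-1\}$, which is fairly involved. You instead expand $m_\zeta(F)\bigl(m_\zeta(F)-\sum_{i\in I}e_i\bigr)$ directly via Lemma \ref{lem:func} into a diagonal part (controlled by Lemma \ref{lem:d-1} applied to each irreducible $F_i$) and a symmetric cross part (controlled by pairwise B\'ezout on each pair $(F_i,F_j)$), and then pay for $\Sigma_2$ out of the B\'ezout slack at smooth intersection points through the elementary inequality $e_i+e_j\leq 2e_ie_j$. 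This is cleaner and more symmetric, and it makes fully explicit the reading of the double-indexed sum (each $\zeta$ contributes only through its maximal index set $I(\zeta)$); your observation that the ``sum over all pairs $(I,\zeta)$'' reading already fails for two transversal lines is correct and worth stating, since the paper leaves this tacit even though its own inductive bookkeeping relies on the same interpretation, as does the application immediately following the lemma. Your final accounting $\sum_i e_i^2 d_i(d_i-1)+2\sum_{i<j}e_ie_j d_id_j = d^2-\sum_i e_i^2 d_i\leq d(d-1)$ plays exactly the role of the paper's $(d'+d_le_l)^2-d'-e_l^2 d_l\leq d(d-1)$, but carried out for all pairs at once rather than one factor at a time, at the cost of the slightly stronger pointwise arithmetic fact $e_i+e_j\leq 2e_ie_j$ in place of the paper's $e_l\leq e_l^2$.
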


Observe that if $\zeta\in S_I$ and $\zeta$ is an isolated singular points on 
$F$, then either $\zeta\in T_I$ or $m_{\zeta}(F) > 
\sum_{i\in I}\mu_\zeta(F_i)$, and $m_{\zeta}(F) = \sum_{i\in I}\mu(F_i)$ if 
it is non-isolated. So only the part corresponding to the isolated singular 
points contribute to the sum on the left hand side of Equation 
\ref{eq:isosing}. So we obtain the following: 
\begin{thm}{\em
	Let $f(\bx)\in \Z[\bx]$ be a nonconstant polynomial of degree $d$. 
Fix a prime $p$ and suppose that $\tf$ does not vanish identically over $\zp$. 
Then $\sum\limits_{\substack{\zeta\text{ isolated}\\\text{singular on }\tf}}  \!\!\!\! \deg\tf_{1,\zeta} \leq d(d-1)$.
}
\end{thm}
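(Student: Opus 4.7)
The plan is to combine a pointwise degree bound $\deg\tilde{f}_{1,\zeta} \leq m_\zeta(\tf)$ with a global multiplicity bound $\sum_{\zeta \text{ isol.\ sing.}} m_\zeta(\tf) \leq d(d-1)$ obtained from Lemma \ref{prp:summul}.

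For the pointwise bound, I would Taylor-expand $f$ at $\zeta$: $f(\zeta+p\bx) = \sum c_{j_1,j_2}\, p^{j_1+j_2}\, x_1^{j_1} x_2^{j_2}$ where $c_{j_1,j_2} = D^{j_1,j_2}f(\zeta)/(j_1!\,j_2!) \in \Z$. Setting $s := s(f,\zeta)$, dividing by $p^s$, and reducing modulo $p$, a monomial $x_1^{j_1} x_2^{j_2}$ survives in $\tilde{f}_{1,\zeta}$ iff $j_1 + j_2 + \ord_p(c_{j_1,j_2}) = s$, which in particular forces $j_1 + j_2 \leq s$. By Lemma \ref{lem:alt} there is an exponent pair realizing $j_1 + j_2 = m_\zeta(\tf)$ with $c_{j_1,j_2} \not\equiv 0 \pmod p$, so by the minimality defining $s$ we have $s \leq m_\zeta(\tf)$. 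Therefore $\deg \tilde{f}_{1,\zeta} \leq s \leq m_\zeta(\tf)$.

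For the global bound, I would factor $\tf = \prod F_i^{e_i}$ in $\F_p[\bx]$ with squarefree part $F' = \prod F_i$, and observe that every summand on the left of \eqref{eq:isosing} is nonnegative. My strategy is to assign, to each isolated singular point $\zeta$ of $\tf$, a distinct pair $(I_\zeta,\zeta)$ whose term in \eqref{eq:isosing} is at least $m_\zeta(\tf)$. If some $F_i$ is singular at $\zeta$, let $I_\zeta := \{i\}$: then $m_\zeta(\tf) \geq e_i m_\zeta(F_i) \geq 2 e_i$, so the first-sum contribution is $m_\zeta(\tf)(m_\zeta(\tf) - e_i) \geq e_i\, m_\zeta(\tf) \geq m_\zeta(\tf)$. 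Otherwise $\zeta$ is smooth on every component through it; since ``isolated'' means $m_\zeta(F') = \sum_{i : \zeta \in F_i} m_\zeta(F_i) \geq 2$, at least two components pass through $\zeta$, and taking $I_\zeta$ to be any two of their indices places $\zeta \in T_{I_\zeta}$ and contributes exactly $m_\zeta(\tf)$ to the second sum. Summing these designated terms (distinct across $\zeta$, all nonnegative, so dominated by the full LHS of \eqref{eq:isosing}) yields $\sum_{\zeta \text{ isol.\ sing.}} m_\zeta(\tf) \leq d(d-1)$.

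Composing the two estimates gives the claim. The only real subtlety I anticipate is verifying that the case split in the global bound genuinely exhausts the isolated singular points, but this is merely the unpacking of $m_\zeta(F') \geq 2$ into ``either some $F_i$ is singular at $\zeta$, or at least two distinct $F_i$ pass through $\zeta$.'' Everything else is bookkeeping with the Taylor expansion and the nonnegativity of terms in \eqref{eq:isosing}.
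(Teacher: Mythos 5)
Your proposal is correct and follows the same two-step strategy as the paper: combine the pointwise bound $\deg\tf_{1,\zeta}\leq s(f,\zeta)\leq m_\zeta(\tf)$ with the multiplicity-sum bound extracted from Lemma \ref{prp:summul}. The difference is one of completeness rather than substance. The paper's proof is a single line (``this is immediate by observing $\deg\tf_{1,\zeta}\leq s(f,\zeta)\leq m_\zeta(\tf)$'') that silently relies on the observation preceding the theorem statement, and that observation is itself somewhat garbled (it invokes an undefined $\mu_\zeta(F_i)$ and does not explicitly justify that each isolated singular point contributes at least $m_\zeta(\tf)$ to the left side of \eqref{eq:isosing}). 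You spell out exactly what the paper leaves implicit: for each isolated singular $\zeta$ you designate a single $(I_\zeta,\zeta)$ term in \eqref{eq:isosing} that dominates $m_\zeta(\tf)$ --- either $I_\zeta=\{i\}$ with $F_i$ singular at $\zeta$, giving $m_\zeta(F)(m_\zeta(F)-e_i)\geq e_i m_\zeta(F)\geq m_\zeta(F)$, or $I_\zeta$ a two-element index set with $\zeta\in T_{I_\zeta}$, giving $m_\zeta(F)$ from the second sum --- and you note that all remaining terms are nonnegative (since $\zeta\in S_I$ forces $m_\zeta(F)\geq\sum_{i\in I}e_i$), so the designated terms are dominated by the full left side of \eqref{eq:isosing}. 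Your unpacking of ``isolated'' via $m_\zeta(F')\geq 2$ into the two cases is the right case analysis and is exactly what the paper's observation is gesturing at. In short: same approach, but your write-up closes the gap that the paper's one-sentence proof leaves open.
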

\begin{proof}
	This is immediate by observing that $\deg \tf_{1,\zeta} \leq s(f,\zeta)\leq m_{\zeta}(\tf)$. 
\end{proof}

However, bounding the degree of the perturbations $\tf_{1,\zeta}$ corresponding to {\em non-isolated singular points} of $\tf$ can be hard. This is evident in 
the discussion in the final section of the Appendix: lifting non-isolated 
singular points for certain families of curves requires extra care. 

\subsection{Algorithms and Complexity Analysis: Proof of Theorem \ref{thm:sepcurve}}\label{sec:algo}
For this section, let us consider bivariate polynomials $f(\bx) \in \Z[\bx]$ of 
the form $f(\bx) = g(x_1) + h(x_2)$. One broad family of examples of such 
bivariate polynomials is the family of superelliptic curves: 
$f(\bx) = x_2^d - g(x_1)$.

\begin{lem}\label{lem:sepcurve}
{\em	Let $F(x_1,x_2) = g(x_1) + h(x_2)\in \F_p[\bx]$ such that $g, h$ are nonconstant polynomials. Then $F$ is squarefree.}
\end{lem}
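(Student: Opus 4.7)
The approach is proof by contradiction, exploiting the unusually restrictive shape of the partial derivatives of a variable-separated polynomial. Assume for contradiction that $F$ admits an irreducible factor $P \in \F_p[\bx]$ with $P^2 \mid F$. Writing $F = P^2Q$ and differentiating gives $P \mid \partial F/\partial x_i$ for $i = 1, 2$, so $P$ divides both $g'(x_1)$ and $h'(x_2)$. Since $g'$ lies in $\F_p[x_1]$ and $h'$ in $\F_p[x_2]$, this forces $P$ into a very small set of possible irreducibles.

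In the main case where neither $g'$ nor $h'$ vanishes identically, $P \mid g'(x_1)$ in the UFD $\F_p[\bx] = (\F_p[x_1])[x_2]$ forces $P$ to be associate to a prime factor of $g'(x_1)$ in $\F_p[x_1]$, using the fact that primes of $\F_p[x_1]$ remain prime after adjoining $x_2$. Hence $P \in \F_p[x_1]$ up to a unit. Symmetrically, $P \in \F_p[x_2]$, and the relation $\F_p[x_1] \cap \F_p[x_2] = \F_p$ then contradicts $P$ being a nonunit irreducible. This handles the generic situation cleanly.

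The edge case requiring more care is when one derivative vanishes identically. In characteristic $p$, $g'(x_1) = 0$ forces $g(x_1) = g_0(x_1)^p$ with $g_0$ nonconstant (using $a = a^p$ for $a \in \F_p$). Assuming $g' = 0$ but $h' \neq 0$, the previous argument still yields $P \in \F_p[x_2]$, say $P = P(x_2)$ of degree $\geq 1$. Expanding $F = \sum_{j \geq 0} c_j x_1^{jp} + h(x_2)$ as a polynomial in $x_1$ with coefficients in $\F_p[x_2]$, the divisibility $P(x_2)^2 \mid F$ forces $P(x_2)^2$ to divide each scalar coefficient $c_j$ for $j \geq 1$; since $\deg P \geq 1$, this kills the leading coefficient of $g_0$, contradicting $g$ nonconstant. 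The main obstacle I foresee is the genuinely degenerate subcase $g' = h' = 0$, in which $F = (g_0(x_1) + h_0(x_2))^p$ actually fails to be squarefree; I would flag this case and argue that it is either ruled out by the algorithmic context (where the perturbations $f_{i,\zeta}$ preserve enough degree data to avoid both reductions becoming pure $p$th powers) or absorbed into the \emph{non-isolated singularity} subtleties the introduction warns about.
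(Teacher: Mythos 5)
Your main-case argument is the paper's argument restated: both rest on the observation that $P^2\mid F$ forces $P\mid \partial F/\partial x_1 = g'(x_1)$, and that a nonunit irreducible of $\F_p[x_1,x_2]$ cannot divide nonzero elements of both $\F_p[x_1]$ and $\F_p[x_2]$. Your subcase $g'\equiv 0$, $h'\not\equiv 0$ is also correct and is what the paper's ``without loss of generality'' silently relabels away.

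The real substance of your proposal is the degenerate case you flagged, and you should press on it rather than hedge: the lemma \emph{as stated} is false. If $g'\equiv 0$ and $h'\equiv 0$ in $\F_p[x_i]$ then $g=g_0^p$ and $h=h_0^p$ with $g_0,h_0$ nonconstant, so $F=(g_0+h_0)^p$ is a $p$-th power; the simplest counterexample is $p=2$, $F=x_1^2+x_2^2=(x_1+x_2)^2$. The paper's proof does not cover this: the step ``without loss of generality assume $g'(x_1)=D^{1,0}F\neq 0$'' is a genuine WLOG only when at least one of $g',h'$ is nonzero, and the degenerate case is precisely when both vanish. Nor does the algorithmic context rescue things as you hoped: $s(g)=s(h)=0$ is compatible with $\tilde{g}'\equiv\tilde{h}'\equiv 0$ (take $f=x_1^p+x_2^p$ over $\Z$), in which case $\tilde{f}=(x_1+x_2)^p$ has an entire line of singular points, breaking the ${d\choose 2}$ bound invoked in Lemma \ref{lem:curvebranch} and the \textbf{For} loop at Step 9 of Algorithm \ref{algor:curvemain}. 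Since a nonconstant $p$-th power has degree at least $p$, this can only occur when $p\leq d$; a small-prime special case would patch the algorithm, but as written both the lemma and its proof have a genuine gap, which your proposal correctly identifies rather than fails to close.
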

\begin{proof}
	Suppose $F$ is not squarefree and let $F = \prod_{i=1}^{l} F_i^{e_i}\in \F_p[\bx]$ be the irreducible factorization of $F$, and $e_i\geq 1$. Without loss of generality assume $e_1 > 1$ and $g'(x_1) = D^{1,0}F \neq 0$. Let $G = F/F_1^{e_1} = \prod_{i=2}^lF_i^{e_i}$. Differentiating $F$ with respect to $x_1$, we have
	\begin{align*}
	g'(x_1) &= e_1F_1^{e_1-1}D^{1,0}F\cdot G + F_1^{e_1} \cdot D^{1,0}G \\
	&= F_1^{e_1-1}\left(e_1D^{1,0}F\cdot G + F_1 \cdot D^{1,0}G\right). 
	\end{align*}
	So $F_1(x_1,x_2)$ must divide $g'(x_1)$, implying that $h(x_2)$ is a constant, a contradiction. 
\end{proof}

We now have enough ingredients to state our main algorithm: \\    
\mbox{}\hspace{.1cm}\scalebox{.84}[.84]{\fbox{\mbox{}\hspace{.1cm}\vbox{
\begin{algor}[{\tt PrimePowerPointCounting}$(f,p,k)$] 
\label{algor:curvemain}
\mbox{}\\ 
{\bf Input.} 
$(f,p,k)\!\in\!\Z[\bx]\times \N\times \N$ with 
$p$ prime and $f(\bx) = g(x_1)+h(x_2)$.\\ 
{\bf Output.} An integer $M\!\leq\!N_{p,k}(f)$ 
that, with probability at least $\frac{2}{3}$, is exactly $N_{p,k}(f)$.\\ 
{\bf Description.} \\ 
1: {\bf Let} $v\!:=s(f)$ and $f_{0,0}\!:=\!f$.\\  
2: {\bf If} $v\!\geq\!k$ \\ 
3: \mbox{}\hspace{.7cm}{\bf Let} $M\!:=\!p^{2k}$. {\bf Return}.\\   
4: {\bf If} $v\!\in\!\{1,\ldots,k-1\}$\\ 
5: \mbox{}\hspace{.7cm}{\bf Let} $M\!:=\!p^{2v}\text{{\tt 
PrimePowerPointCounting}$\left(\frac{f_{0,0}(\bx)}{p^v},p,k-v\right)$}$. 
	{\bf Return}.\\ 
6: {\bf End(If)}. \\ 
7: {\bf If} $s(g) = s(h) = 0$ \\
8: \mbox{}\hspace{.7cm} {\bf Let} $M\!:=p^{k-1}n_p(f)$. \\ 
9: \mbox{}\hspace{.7cm} {\bf For} $\zeta^{(0)}\!\in\!(\zp)^2$ a singular point 
of $\tf_{0,0}$ {\bf do}$^2$\\ 
\mbox{}\hspace{-.2cm}10: \mbox{}\hspace{1.4cm}{\bf Let} $s\!:=\!s(f_{0,0},\zeta^{(0)})$.\\   
\mbox{}\hspace{-.2cm}11: \mbox{}\hspace{1.4cm}{\bf If} $s\!\geq\!k$\\ 
\mbox{}\hspace{-.2cm}12: \mbox{}\hspace{2.1cm}{\bf Let} $M\!:=\!M+
 p^{2(k-1)}$.\\ 
\mbox{}\hspace{-.2cm}13: \mbox{}\hspace{1.4cm}{\bf Elseif} $s\!\in\{2,\ldots,
 k-1\}$\\ 
\mbox{}\hspace{-.2cm}14: \mbox{}\hspace{2.1cm}{\bf Let} 
  $M\!:=\!M+p^{2(s-1)}\text{{\tt PrimePowerPointCounting}$\left(f_{1,\zeta^{(0)}},p,k-s\right)$}$.\\ 
\mbox{}\hspace{-.2cm}15: \mbox{}\hspace{1.4cm}{\bf End(If)}.\\   
\mbox{}\hspace{-.2cm}16: \mbox{}\hspace{0.7cm}{\bf End(For)}.\\
\mbox{}\hspace{-.2cm}17: {\bf Elseif} $s(g) \geq 0$ or $s(h) \geq 0$ accordingly\\ 
\mbox{}\hspace{-.2cm}18: \mbox{}\hspace{.7cm} {\bf Let} $M\!:=p^{k}n_p(g)$ or $p^kn_p(h)$. \\  
\mbox{}\hspace{-.2cm}19: \mbox{}\hspace{.7cm} {\bf For} $\zeta^{(0)}\!\subseteq\!(\zp)^2$ a set of singular points of $\tf_{0,0}$ from a degenerate root of $\tilde{g}$ or $\tilde{h}$ {\bf do}\\  
\mbox{}\hspace{-.2cm}20: \mbox{}\hspace{1.4cm}{\bf Let} $s\!:=\!s(f_{0,0},\zeta^{(0)})$.\\   
\mbox{}\hspace{-.2cm}21: \mbox{}\hspace{1.4cm}{\bf If} $s\!\geq\!k$\\ 
\mbox{}\hspace{-.2cm}22: \mbox{}\hspace{2.1cm}{\bf Let} $M\!:=\!M+p^{2k-1}$.\\ 
\mbox{}\hspace{-.2cm}22: \mbox{}\hspace{1.4cm}{\bf Elseif} $s\!\in\{2,\ldots,
k-1\}$\\ 
\mbox{}\hspace{-.2cm}23: \mbox{}\hspace{2.1cm}{\bf Let} 
$M\!:=\!M+p^{2s-1}\text{{\tt 
PrimePowerPointCounting}$\left(f_{1,\zeta^{(0)}},p,k-s\right)$}$.\\ 
\mbox{}\hspace{-.2cm}24: \mbox{}\hspace{1.4cm}{\bf End(If)}.\\   
\mbox{}\hspace{-.2cm}25: \mbox{}\hspace{0.7cm}{\bf End(For)}.\\
\mbox{}\hspace{-.2cm}26: {\bf End(If)}.\\
\mbox{}\hspace{-.2cm}27: {\bf Return}.
\end{algor}}
}
}

There are some remaining details to clarify about our algorithm. First,  
let $s(f)$ denote the largest power of $p$ that divides all the coefficients 
of $f$. Then by Definition \ref{dfn:ntpk}, we see that any polynomial in 
$T_{p,k}(f)$ should also be of the form $g(x_1) + h(x_2)$ with $s(g) = 0$ or 
$s(h) = 0$. By Lemma \ref{lem:sepcurve}, we see that when $s(g) = s(h) = 0$, 
then $\tf \mod p$ is squarefree. Now without loss of generality, suppose 
$0 = s(g) < s(h) = c$, then $\tf(\bx) = \tilde{g}(x_1)$ mod $p$. Then any 
singular point on $\tf$ should be of the form $(\zeta^{(0)}_1, y)$ for any 
{\it degenerate root} $\zeta^{(0)}_1$ of the univariate polynomial 
$\tilde{g}(x_1)\in \F_p[x_1]$ and any choice of $y\in \{0,1,\ldots,p-1\}$. So 
it makes sense to consider the perturbation of $f$ in the direction of $x_1$ 
only.

Let $\zeta^{(0)}_1$ be any degenerate root of $\tilde{g}$. Abusing 
notation, let $\zeta^{(0)}:=\{\zeta^{(0)}_1\}\times \F_p = \{(\zeta^{(0)}_1, y): y\in \{0,1,\ldots,p-1\}\}$, the set of singular points of $\tf$ with the first coordinate being $\zeta^{(0)}_1$. Consider
$f(\zeta^{(0)}_1 + px_1,x_2) = g(\zeta^{(0)}_1 + px_1) + h(x_2)$. 
Let $s(f, \zeta^{(0)}):=s(f(\zeta^{(0)}_1 + px_1,x_2)) = \min(s(g,\zeta^{(0)}_1), c)$, the largest $p$'s power dividing all the coefficients of the 
perturbation, and let 
$f_{1,\zeta^{(0)}} = \frac{1}{p^{s(f, \zeta^{(0)})}}f(\zeta^{(0)}_1 + 
px_1,x_2)$.

We prove the following more specific version of Lemma \ref{lem:nrec} in 
the Appendix:
\begin{lem}\label{lem:sepbasic}
{\em
	Let $f(\bx) = g(x_1) + h(x_2)$ with $0 = s(g) < s(h) = c$. Let $n_p(g)$ denote the number of {\it non-degenerate root} of $\tilde{g}$ in $\F_p$, and following the notation above: 
	\begin{align*}
	N_{p,k} = p^kn_p(g)+\left(\sum\limits_{\substack{\zeta^{(0)} \subseteq (\Z/p\Z)^2\\
			s(f, \zeta^{(0)})\geq k}} p^{2k-1}\right) 
	+ 
	\sum\limits_{\substack{\zeta^{(0)}\subseteq(\Z/p\Z)^2\\
			s(f,\zeta^{(0)})\leq k-1}} p^{2s(f,\zeta^{(0)})-1}N_{p,k-s(f,\zeta^{(0)})}(f_{1,\zeta^{(0)}})
	\end{align*}}
\end{lem}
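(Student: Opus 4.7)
The plan is to stratify $N_{p,k}(f)$ by the reduction mod $p$ of the first coordinate, mimicking Lemma \ref{lem:nrec} but with careful bookkeeping of the extra degree of freedom in the $x_2$-direction. Because $s(g)=0$ and $s(h)=c>0$, we have $\tf(\bx)=\tilde{g}(x_1)$ as an element of $\F_p[\bx]$, so the zero set of $\tf$ in $\F_p^2$ is the union of vertical lines $\{\alpha\}\times \F_p$ as $\alpha$ ranges over the roots of $\tilde{g}$. Since $\partial_{x_2}\tf\equiv 0$, a point $(\alpha,y_0)$ is smooth on $\tf$ if and only if $\tilde{g}'(\alpha)\neq 0$, i.e., iff $\alpha$ is a non-degenerate root of $\tilde{g}$.

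For each non-degenerate root $\alpha$ of $\tilde{g}$, every point of the line $\{\alpha\}\times \F_p$ is then a smooth point of $\tf$. Proposition \ref{prp:smooth} (applied with $j=1$) shows that each such point lifts to exactly $p^{k-1}$ roots of $f$ mod $p^k$. Summing over the $p$ choices of $y_0$ and the $n_p(g)$ non-degenerate roots yields the first term, $p\cdot p^{k-1}\cdot n_p(g)=p^k n_p(g)$.

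For a degenerate root $\alpha$ of $\tilde{g}$, the whole line $\zeta^{(0)}:=\{\alpha\}\times\F_p$ of singular points of $\tf$ will be treated as a single unit. Writing $x_1=\alpha+pu$ with $u\in \Z/p^{k-1}\Z$ and $x_2=v\in \Z/p^k\Z$, the identity $f(\alpha+pu,v)=p^s f_{1,\zeta^{(0)}}(u,v)$ holds with $s=s(f,\zeta^{(0)})=\min(s(g,\alpha),c)$. If $s\geq k$ then $f(\alpha+pu,v)\equiv 0 \mod p^k$ for every $(u,v)$, contributing $p^{k-1}\cdot p^k=p^{2k-1}$ lifts. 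If $s\leq k-1$ then the congruence $f\equiv 0 \mod p^k$ is equivalent to $f_{1,\zeta^{(0)}}(u,v)\equiv 0 \mod p^{k-s}$, a condition depending only on $(u\bmod p^{k-s},\,v\bmod p^{k-s})$; this produces $N_{p,k-s}(f_{1,\zeta^{(0)}})$ admissible residue classes, each expanding into $p^{(k-1)-(k-s)}\cdot p^{k-(k-s)}=p^{s-1}\cdot p^s=p^{2s-1}$ actual pairs $(u,v)$, giving the third term.

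The main obstacle is not any deep algebraic step but correctly tracking the asymmetric multiplicities $p^{2k-1}$ and $p^{2s-1}$, in place of the symmetric $p^{2(k-1)}$ and $p^{2(s-1)}$ of Lemma \ref{lem:nrec}. These powers arise because the perturbation is only carried out in the $x_1$ direction, so the coordinate $x_2$ still carries a full $k$-digit expansion while $u$ carries only $k-1$. A minor supporting computation is to justify $s(f,\zeta^{(0)})=\min(s(g,\alpha),c)$: the $x_2$-part of $f(\alpha+px_1,x_2)$ is $h(x_2)$, whose nonzero coefficients all have valuation exactly $c$, while the $x_1$-part is $g(\alpha+px_1)$, whose coefficients have minimum valuation $s(g,\alpha)$ by the standard univariate computation. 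Once these two points are in place, summing the contributions over all $\alpha\in\F_p$ yields the claimed identity.
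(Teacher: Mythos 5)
Your proof is correct and follows the same route as the paper's: identify the smooth points of $\tf$ as the vertical lines over non-degenerate roots of $\tilde g$ (giving $n_p(f)=p\,n_p(g)$ and hence the $p^k n_p(g)$ term via Proposition \ref{prp:smooth}), then perturb only in $x_1$ over each degenerate root, with $u\in\Z/p^{k-1}\Z$ and $v\in\Z/p^k\Z$ producing the asymmetric weights $p^{2k-1}$ and $p^{2s-1}$. One small slip: $s(h)=c$ means the \emph{minimum} valuation of the coefficients of $h$ is $c$, not that they all equal $c$, but this does not affect the argument since only the minimum is used to conclude $s(f,\zeta^{(0)})=\min(s(g,\alpha),c)$.
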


By symmetry, a variant of our preceding lemma also holds when 
$0 = s(h) < s(g) = c$. 
Similarly, for any degenerate root $\zeta^{(0)}_2$ of the univariate polynomial $\tilde{h}(x_2)\in \F_p$, we denote $\zeta^{(0)}:=\F_p\times\{\zeta^{(0)}_2\}$ to be the set of singular points of $\tf$ with the second coordinate being $\zeta^{(0)}_2$. 

\begin{nota}{\em
	Suppose $\zeta^{(i-1)} = \{\zeta^{(i-1)}_1\}\times \F_p$ is the set of singular points on $\tf_{i-1,\zeta}$ for some polynomial in $T_{p,k}(f)$ and $\zeta = (\zeta_1,\zeta_2)$, we write
	\begin{align*}
	\zeta + p^{i-1}\zeta^{(i-1)} = \{(x_1, x_2): x_1 = \zeta_1 + p^{i-1}\zeta^{(i-1)}_1, x_2\in\{\zeta_2 + p^{i-1}\cdot 0, \ldots \zeta_2 + p^{i-1}\cdot p-1\}\}
	\end{align*}
	as element-wise operations for set. We also use this notation similarly when $\zeta^{(i-1)} = \F_p\times\{\zeta^{(i-1)}_2\}$.}
\end{nota}

We are now ready to prove the correctness of our main algorithm.

\medskip
\noindent 
{\bf Proof of Correctness of Algorithm \ref{algor:curvemain}:} 
Assume temporarily that Algorithm \ref{algor:curvemain} is correct 
when $s(f)=0$, i.e. when $f_{0,0}$ is {\em not} identically $0$ mod $p$. 
Since for any integers $a$ with $a\!\leq\!k$, and any elements $\bx,\mathbf{y} \in (\zpk)^2$, 
$p^a \bx\!=\!p^a \mathbf{y}$ mod $p^k \Longleftrightarrow 
\bx\!=\!\mathbf{y}$ mod $p^{k-a}$, Steps 1--6 of our algorithm then  dispose of the case where $f$ is identically $0$ in 
$(\zp)[\bx]$. So let us now prove correctness when $f$ is {\em not} 
identically $0$ in $(\zp)[\bx]$.

Recall from the discussion at the very beginning of this section, we see that any polynomial in $\tpk(f)$ should be of the form $f_{i,\zeta^{(i-1)}}(\bx):=g_i(x_1)+h_i(x_2)$ with $s(g_i) = 0$ or $s(h_i) = 0$. Applying Lemma \ref{lem:nrec} and Lemma \ref{lem:sepbasic} accordingly, we then 
see that 
it is enough to prove that the value of $M$ is the value of our formula for 
$N_{p,k}(f)$ when the two {\bf For} loops of Algorithm \ref{algor:curvemain} runs 
correctly. 

When $s(g) = s(h) = 0$, Steps 7--16 (once the {\bf For} loop is completed) then simply 
add the second and third summands of our formula in Lemma \ref{lem:nrec} to 
$M$ thus ensuring that $M\!=\!N_{p,k}(f)$. On the other hand, when $s(g)> 0$ or $s(h)>0$, Steps 17--26 (once the {\bf For} loop is completed) handles
add the second and third summands of our formula in Lemma \ref{lem:sepbasic} to 
$M$ thus ensuring that $M\!=\!N_{p,k}(f)$. So we are done. \qed 

\vspace{10pt}

In \cite{krrz19}, we defined a recursive tree structure for root counting for 
univariate polynomial in $\zpk$. We define similarly a recursive tree for $f(\bx) = g(x_1)+h(x_2)$ that will enable our complexity analysis. 
\begin{dfn}\label{dfn:septree}{\em
	Let us identify the elements of $T_{p,k}(f)$ with nodes of a lablled rooted directed tree $\ctpk(f)$. 
	\begin{enumerate}
		\item[(1)]{We set $f_{0,0}\!:=\!f$, $k_{0,0}\!:=\!k$, and let 
			$(f_{0,0},k_{0,0})$ be the label of the root node of 
			$\cT_{p,k}(f)$. }   
		\item[(2)]{There is an edge from node $(f_{i',\zeta'},k_{i',\zeta'})$ to 
			node $(f_{i,\zeta},k_{i,\zeta})$ if and only if 
			$i'\!=\!i-1$ and there is a (set of) singular points $\zeta^{(i-1)}$ in $(\zp)^2$ of $\tf_{i',\zeta'}$ with $s(f_{i',\zeta'},\zeta^{(i-1)})\leq k_{i',\zeta'}-1$ and $\zeta\!=\!\zeta'+p^{i-1}\zeta^{(i-1)}$ in
			$(\Z/p^i\Z)^2$.}  
		\item[(3)]{Suppose $f_{i',\zeta'} = g_{i'}(x_1) + h_{i'}(x_2)$. The label of a directed edge from node $(f_{i',\zeta'},k_{i',\zeta'})$ 
			to node $(f_{i,\zeta},k_{i,\zeta})$ is $p^{2\left( s\left(f_{i',\zeta'},
				(\zeta-\zeta')/p^{i'}\right)-1\right) }$ or $p^{2s\left(f_{i',\zeta'},
				(\zeta-\zeta')/p^{i'}\right)-1 }$ respectively when $s(g_{i'}) = s(h_{i'}) = 0$ or otherwise.} 
	\end{enumerate}  
In particular, the labels of the nodes lie in $\Z[\bx]\times \N$. }
\end{dfn}

\begin{rem} \mbox{}\\ 
{\em
1. Just as the tree structure for the univariate polynomial in \cite{krrz19}, 
our trees $\ctpk(\cdot)$ encode algebraic expressions for our desired root 
counts $N_{p,k}(\cdot)$. In particular, the children of a node labelled 
$(f_i,k_i)$ yield terms that one sums to get the root count $N_{p,k_i}(f_i)$, 
and the edge labels yield weights multiplying the corresponding terms. 

\smallskip 
\noindent 
2. One main difference is that the correspondence between polynomials in 
$\tpk(f)$ with the label in the tree $\ctpk(f)$ is no longer one-to-one. In 
particular, in the case when $f_{i,\zeta}(\bx)=g_i(x_1)+h_i(x_2)$ with 
$s(g_i) >0$, its child node polynomial $f_{i+1, \zeta'}$ for $\zeta'-\zeta = 
\{\zeta^{(i)}_1\}\times \F_p$, correspond to {\em a set of } singular points of 
$\tf_{i,\zeta}$ with the first coordinate equaling to a degenerate root 
$\zeta^{(i)}_1$ of $\tilde{g}_i$. \dia }
\end{rem}
The following lemma, proved in the Appendix, will be central in our complexity 
analysis. 
\begin{lem}\label{lem:curvebranch}{\em
	Let $f(\bx) = g(x_1) + h(x_2)\in \Z[\bx]$ be a nonconstant polynomial of degree $d$. Following the notation of Definition \ref{dfn:septree}, we have 
that:
	\begin{enumerate} 
		\item[(1)]{The depth of $\cT_{p,k}(f)$ is at most $k$.} 
		\item[(2)]{The degree of the root node of $\cT_{p,k}(f)$ is at most ${d\choose 2}$.} 
		\item[(3)]{The degree of any {\em non}-root node of $\cT_{p,k}(f)$ labeled 
			$(f_{i,\zeta},k_{i,\zeta})$, with parent $(f_{i-1,\mu},k_{i-1,\mu})$ 
			and \ $\zeta^{(i-1)}:=(\zeta-\mu)/p^{i-1}$, \ 
			is \ at \ most \ $s(f_{i-1,\mu},\zeta^{(i-1)})$. \ In \ 
			particular,\\  
			$\deg \tf_{i,\zeta}\!\leq\!s(f_{i-1,\mu},\zeta^{(i-1)})
			\!\leq\!k_{i-1,\mu}-1\!\leq\!k-1$ and
			\begin{align*}
			\sum\limits_{\substack{(f_{i,\zeta},k_{i,\zeta}) \text{ a child}\\ 
					\text{of } (f_{i-1,\mu},k_{i-1,\mu})}}  \!\!\!\!\!
			\!\!\!\!\!
			\deg \tf_{i,\zeta}\left( \deg \tf_{i,\zeta}-1\right) \leq \deg\tf_{i-1,\mu}\left( \deg\tf_{i-1,\mu} - 1\right) 
			\end{align*}
		}
		\item[(4)]{$\cT_{p,k}(f)$ has at most ${d \choose 2}$ nodes at 
			depth $i\!\geq\!1$, and thus a total of no more than 
			$1+(k-1){d\choose 2}$ nodes.}  
	\end{enumerate} }
\end{lem}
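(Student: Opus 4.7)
The plan is to establish the four parts in order, relying on Lemmas \ref{lem:alt}, \ref{lem:sepcurve}, \ref{lem:nmul}, and \ref{lem:d-1} assembled earlier. For part (1), I would observe that every edge in $\ctpk(f)$ strictly decreases the second label: the edge-existence condition $s_{i-1}:=s(f_{i-1,\mu},\zeta^{(i-1)})\leq k_{i-1,\mu}-1$ together with $s_{i-1}\geq 1$ gives $1\leq k_{i,\zeta}<k_{i-1,\mu}$, so the labels along any root-to-leaf path form a strictly decreasing sequence in $\{1,\ldots,k\}$ (in fact, occupying only depths $0,\ldots,k-1$), bounding the depth by $k$. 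For part (2), I would split on the two cases in Algorithm \ref{algor:curvemain}. If $s(g)=s(h)=0$, Lemma \ref{lem:sepcurve} makes $\tf$ squarefree, so every singular point is isolated, and Lemma \ref{lem:d-1} bounds their number by $\binom{d}{2}$. If instead (WLOG) $s(g)=0$ and $s(h)>0$, then $\tf(\bx)=\tilde g(x_1)$ and children correspond bijectively to degenerate roots of the univariate $\tilde g$, of which there are at most $\lfloor\deg\tilde g/2\rfloor\leq\binom{d}{2}$ for $d\geq 2$ (by a simple count using $\gcd(\tilde g,\tilde g')$).

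For part (3), the core step is a Taylor expansion: since $f_{i,\zeta}(\bx)=p^{-s_{i-1}}f_{i-1,\mu}(\zeta^{(i-1)}+p\bx)\bmod p^{k_{i,\zeta}}$, the coefficient of $x_1^{j_1}x_2^{j_2}$ in $f_{i,\zeta}$ equals $p^{j_1+j_2-s_{i-1}}\cdot D^{j_1,j_2}f_{i-1,\mu}(\zeta^{(i-1)})/(j_1!\,j_2!)$. This is nonzero mod $p$ precisely when the minimum in the definition of $s_{i-1}$ is achieved at $(j_1,j_2)$, forcing $j_1+j_2\leq s_{i-1}$ and hence $\deg \tf_{i,\zeta}\leq s_{i-1}$. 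The chain $s_{i-1}\leq k_{i-1,\mu}-1\leq k-1$ is immediate from the edge condition. For the summation inequality I would combine Lemma \ref{lem:nmul} (which gives $s_{i-1}\leq m_{\zeta^{(i-1)}}(\tf_{i-1,\mu})$) with Lemma \ref{lem:d-1} applied to $\tf_{i-1,\mu}$; variable-separatedness is preserved under perturbation, so Lemma \ref{lem:sepcurve} still guarantees squarefreeness of the parent whenever $s(g_{i-1})=s(h_{i-1})=0$, and the telescoping inequality follows term-by-term.

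For part (4), I would induct on depth using the summation inequality from (3): the bound $\sum_{w\text{ at depth }i}\deg\tf_w(\deg\tf_w-1)\leq d(d-1)$ holds trivially at the root and propagates level by level via (3). Since the number of children of any depth-$(i-1)$ parent $v$ is bounded by the number of isolated singular points on $\tf_v$, hence by $\binom{\deg\tf_v}{2}=\deg\tf_v(\deg\tf_v-1)/2$ (Lemma \ref{lem:d-1}), summing over depth-$(i-1)$ parents yields at most $d(d-1)/2=\binom{d}{2}$ depth-$i$ nodes. Combining with (1), which restricts nonempty levels to $0,1,\ldots,k-1$, gives the total node count $1+(k-1)\binom{d}{2}$.

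The main obstacle will be keeping the mixed case ($s(g)>0$ or $s(h)>0$) in lockstep with the squarefree case throughout parts (2)--(4): in that regime $\tf$ is essentially univariate and children correspond to degenerate roots rather than to isolated planar singularities, so the uses of Lemma \ref{lem:d-1} must be swapped for a univariate multiplicity count, and the recursive bookkeeping of $s_{i-1}=\min(s(g_{i-1},\zeta^{(i-1)}_1),c)$ has to be verified to interact correctly with the Taylor-coefficient argument in (3). Everything else reduces to already established machinery.
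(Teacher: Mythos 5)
Your proposal follows the paper's own proof essentially line for line: part (1) via strictly decreasing $k$-labels, part (2) by splitting into the squarefree case (Lemma \ref{lem:sepcurve} plus Lemma \ref{lem:d-1}) and the degenerate $\tilde g$ case, part (3) via the Taylor-coefficient divisibility argument plus Lemma \ref{lem:nmul} and Lemma \ref{lem:d-1}, and part (4) from (1) and (3). (Minor note: Definition \ref{dfn:ntpk} actually forces $s_{i-1}\geq 2$, not merely $\geq 1$, which would give a tighter depth bound, but the looser bound you state is what the paper proves and it suffices.)

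The one genuine gap is the summation inequality in part (3) for the mixed case $0=s(g_{i-1})<s(h_{i-1})=c$. You correctly flag that Lemma \ref{lem:d-1} must be replaced by a univariate multiplicity count, but you stop there. The paper's argument fills this in as follows: since $\deg\tf_{i,\zeta}\leq s(f_{i-1,\mu},\zeta^{(i-1)})=\min\!\bigl(s(\tilde g_{i-1},\zeta^{(i-1)}_1),c\bigr)\leq s(\tilde g_{i-1},\zeta^{(i-1)}_1)$ and the univariate bound $\sum_{\zeta^{(i-1)}_1}s(\tilde g_{i-1},\zeta^{(i-1)}_1)\leq\deg\tilde g_{i-1}$ holds (this is the degree bound from the univariate tree in \cite{krrz19}), one gets $\sum\deg\tf_{i,\zeta}\leq\deg\tilde g_{i-1}=\deg\tf_{i-1,\mu}$. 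Converting this sum-of-degrees bound into the required sum-of-$a_i(a_i-1)$ bound then needs the elementary observation that for integers $a_i\geq 2$ one has $\sum a_i(a_i-1)\leq\bigl(\sum a_i\bigr)\bigl(\sum a_i-1\bigr)$. Without that final arithmetic step, the mixed-case inequality in (3), and hence your inductive proof of (4), is incomplete; everything else is sound.
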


\noindent 
{\bf Proof of Theorem \ref{thm:sepcurve}:} 
Since we already proved that Algorithm \ref{algor:curvemain}, it suffices to prove the stated complexity bound for Algorithm \ref{algor:curvemain}. The proof consists of three parts: (a) the point counting algorithm over $\F_p$ from 
\cite{har15}, (b) the univariate reduction and the factorization algorithm, 
and (c) applying Lemma \ref{lem:curvebranch} to show that the number of 
necessary factorization and point counting, and $p$-adic valuation calculations is well-bounded.

More specifically the {\bf For} loops and the recursive calls of Algorithm \ref{algor:curvemain} can be seen as the process of building the tree $\ctpk(f)$. We begin at the root node by applying the algorithm in \cite{har15} to find the 
number of roots of $\tf$ in $\F_p$. This computation takes time $O(d^8p^{1/2}\log^{2+\varepsilon}p)$ and space $O(d^4p^{1/2}\log p)$ by \cite{har15}. 
(Specifically, one avails to Theorem 3.1, Lemmata 3.2 and 3.4, and Proposition 
4.4 from Harvey's paper.) 

To find singular points of $\tf$, it suffices to find the roots of the 
$2\times 2$ polynomial system $F := (\tf(\bx), D^{1,0}\tf(\bx))$ 
over $\F_p$. This is done by first transforming the problem to factorization 
of a univariate polynomial $U_F$ via univariate reduction over the finite 
field (see, e.g. \cite{roj99}). In particular $\deg U_F\leq d^2$ and roots of $U_F$ will encode information on tuple $(x_1,x_2)$ as solutions to the polynomial system $F$. Computing $U_F$ can be done in time polynomial in the 
{\em mixed area} of the Newton polygons of $F$, and takes time $\tilde{O}(d^{15})$ and space $O(d^4)$ (\cite{roj99}). Then we use the fast randomized Kedlaya-Umans factoring algorithm 
in \cite{ku08} to find solutions to $U_F$ in $\F_p$, and thereby the singular 
points of $\tf$. This takes time $(d^3\log p)^{1+o(1)} + (d^2\log^2p)^{1+o(1)}$ 
and requires $O(d^2\log p)$ random bits.

In order to continue the recursion, we need to compute $p$-adic valuations of polynomial coefficients to determine $s(f_{0,0},\zeta^{(0)})$ and the edges emanating from the root node. Expanding $f(\zeta^{(0)}+p\bx)\mod p^k$ takes time no worse than $d^2(k\log p)^{1+o(1)}$ via Horner's method and fast finite ring 
arithmetic (see, e.g., \cite{bs,vzgbook}). Computing $s(f_{0,0},\zeta^{(0)})$ thus takes time $d(k\log p)^{1+o(1)}$ by evaluating $p$-adic valuations using standard tools such as binary methods. By Assertion (2) of Lemma \ref{lem:curvebranch}, there are no more than ${d\choose 2}$ many such $\zeta^{(0)}$. So the total work so far is $d^{15+\varepsilon}(k\log p)^{1+o(1)}p^{1/2+\varepsilon}$. Note that computing the univariate reduction $U_F$ and 
$N_{p,1}(f)$ 
via algorithm in \cite{har15} dominates the computation.

The remaining work can also be well-bounded similarly by Lemma 
\ref{lem:curvebranch}. In particular, the sum of the degress if 
$\tf_{i,\zeta}$ at level $i$ of the tree $\ctpk(f)$ is no greater than 
${d\choose 2}$. 

Now observe that for $i\geq 2$, the amount of work needed to determine the polynomials at level $i$ via computing $s(f_{i-1,\mu},\zeta^{(i-1)})$ is no greater than ${d \choose 2} d(k\log p)^{1+o(1)}$. As $\deg \tf_{1,\zeta}\leq d$ for every $f_{i,\zeta}$ in the tree $\ctpk(f)$ and there are at most ${d\choose 2}$ many such polynomials for each $i\geq 1$, the total amount of work for point counting over $\F_p$, univariate reduction and factorization for each subsequent level of $\ctpk(f)$ will be $d^{17+\varepsilon}(k\log p)^{1+o(1)}p^{1/2+\varepsilon}$ with $O(d^2\log p)$ random bits needed. The expansion of the $f_{i,\zeta}$ at level $i$ will take time no greater than $d^3(k\log p)^{1+o(1)}$ to compute. So the total work at each subsequent level is $d^{17+\varepsilon}(k\log p)^{1+o(1)}p^{1/2+\varepsilon}$.

Therefore the total amount of work for our tree will be $d^{17+\varepsilon}(k\log p)^{2+\varepsilon}p^{1/2+\varepsilon}$, and the number of random bits needed is $O(d^2k\log p)$.

The argument proving the Las Vegas properties of our algorithm can be done similarly as in \cite{krrz19}. In particular, we run factorization algorithm for sufficiently many times to reduce the {\em overall} error probability to less than $2/3$. Thanks to Lemma \ref{lem:curvebranch}, it is enough to enforce a success probability of $O(\frac{1}{d^2k})$ for each application of factorization, and to run the algorithm from \cite{ku08} for $O(\log(dk))$ times for each time we need univariate factorization. So a total of $O(d^2k\log(dk)\log p)$ many random bits is needed.  

Our algorithm proceeds with building the tree structure $\ctpk(f)$, so we only need to keep track of collections of $f_{i,\zeta}$. A bivariate polynomial of degree $d$ with integer coefficients all of absolute value less than $p^k$ requires $O(dk\log p)$ bits to store, and there are no more than ${d\choose 2}k$ many polynomials in $\ctpk(f)$. Combining with the space needed from algorithm in \cite{har15}, we only need $O(d^4kp^{1/2}\log p)$ space. 

If $\tf$ defines a smooth and irreducible curve over the algebraic closure 
$\bar{\F}_p$ of $\F_p$ then the second part of the theorem follows immediately 
by combining our bivariate version of Hensel's Lemma (Lemma \ref{lem:nhensel}) 
with Kedlaya's quantum point counting algorithm from \cite{kedlaya}. \qed 

\section{Appendix: Remaining Proofs and Finessing Exceptional Curves} 

\subsection{Proof of Lemma \ref{lem:nhensel} (Higher-Dimensional Hensel's Lemma)} 
Consider the Taylor expansion of $f$ at $\sigma$ by $p^j\bx$,
\begin{align*}
        f(\sigma+p^j\bx) &= f(\sigma) + p^j\left( \frac{\partial f}{\partial x_1}(\sigma)x_1  + \frac{\partial f}{\partial x_2}(\sigma)x_2\right) + \sum_{i_1+i_2\geq 2}p^{j(i_1+i_2)} D^{i_1,i_2}f(\sigma)x_1^{i_1} x_2^{i_2}\\
        &=f(\sigma) + p^j\left( \frac{\partial f}{\partial x_1}(\sigma)x_1 + \frac{\partial f}{\partial x_2}(\sigma)x_2\right) \mod p^{j+1},
        \end{align*}
        as $j(i_1+i_2) \geq j+1$ for all $i_1+i_2\geq 2$. Then $\bt:=(t_1,t_2)$ is such that $(\sigma + \bt p^j)$ is a solution to $f\equiv 0\mod p^{j+1}$ if and only if
        \begin{align} \label{eq:hensel}
        \frac{\partial f}{\partial x_1}(\sigma)t_1 + \frac{\partial f}{\partial x_2}(\sigma)t_n = -\frac{f(\sigma)}{p^j} \mod p.
        \end{align}
        As $(\zeta^{(0)} = \sigma \mod p)$ is a smooth point on $\tf$, then 
there exists an $i$ such that $\frac{\partial f}{\partial x_i}(\sigma) = 
\frac{\partial f}{\partial x_i}(\zeta^{(0)})\neq 0 \mod p$. Then left hand side 
of (\ref{eq:hensel}) does not vanish identically, and thus define a nontrivial 
linear relation in $(\Z/p\Z)^2$. So fixing $\zeta$, there are exactly $p$ many 
$\bt\in (\Z/p\Z)^2$ satisfying (\ref{eq:hensel}). \qed

\subsection{The Proof of Lemma \ref{prp:summul}}   
We prove by induction on the number of irreducible components of $F$. 
	
	When $l = 1$, $F = F_1^{e_1}$. By Lemma \ref{lem:func}, $m_{\zeta}(F) = e_1m_{\zeta}(F_1)$ for every $\zeta\in \F_p^2$. Then by Lemma \ref{lem:d-1} and expanding
	\begin{align*}
	\sum_{\zeta \text{ on }F_1} \frac{m_\zeta(F)}{e_1}\left( \frac{m_\zeta(F)}{e_1} - 1\right) \leq d_1(d_1-1), 
	\end{align*}
	the conclusion holds.

	Now suppose the inequality holds for $l-1>1$, and let $F' = \prod_{i=1}^{l-1} F_i^{e_i}$ and $d'$ be its degree, and $F_l$ is irreducible and has no common component with $F'$. Then $\sum_{\zeta \text{ on }F_l} m_\zeta(F_l^{e_l})\left( m_\zeta(F_l^{e_l}) - e_l\right) \leq e_ld_l(e_ld_1-e_l)$, and
	\begin{align*}
	\sum_{J\subseteq \{1,\ldots,l-1\}}\left[  \sum_{\zeta\in S_J}m_\zeta(F')\left(m_{\zeta}(F') - \sum_{j\in J}e_j\right) +\sum\limits_{\substack{\zeta\in T_J\\|J|\geq 2}}  
	\!\! m_{\zeta}(F') \right] \leq d'(d'-1)
	\end{align*} 
	By Lemma \ref{lem:bezout}, we must have $\sum_{\zeta} m_\zeta(F')m_\zeta(F_l^{e_l})\leq d'd_le_l$. Summing over all $J\subseteq \{1,\ldots,l-1\}$, we have
	{\small
		\begin{align*}
		&\sum_{J}\left[  \sum_{\zeta\in S_J}m_\zeta(F')\left(m_{\zeta}(F') - \sum_{j\in J}e_j\right) +\sum\limits_{\substack{\zeta\in T_J\\|J|\geq 2}} \!\! m_{\zeta}(F') \right] + 2\sum_{\zeta} m_\zeta(F')m_\zeta(F_l^{e_l})+ \sum_{\zeta \text{ on }F_l} m_\zeta(F_l^{e_l})\left( m_\zeta(F_l^{e_l}) - e_l\right) \\
		&\leq  d'(d'-1) + 2d'd_le_l + (d_le_l)^2-e_l^2d_l \leq  (d'+d_le_l)^2  -d'-e_l^2d_l\leq d(d-1).
		\end{align*}
	}
	Note that for each $J\subseteq\{1,\ldots,l-1\}$ and each $\zeta\in S_J$ such that $\zeta$ is not a point of $F_l$, $m_\zeta(F') = m_\zeta(F)$. If $\zeta\in S_{J\cup \{l\}}\backslash T_{J\cup \{l\}}$, then $m_{\zeta}(F_l^{e_l}) + m_{\zeta}(F') > e_l + \sum_{j\in J}e_j$, and
	\begin{align*}
	&m_\zeta(F')(m_{\zeta}(F') - \sum_{i\in J}e_i) + 2m_\zeta(F_l^{e_l})m_p(F') +  m_\zeta(F_l^{e_l})( m_\zeta(F_l^{e_l}) - e_l) \\
	&=(m_\zeta(F') +m_\zeta(F_l^{e_l}))^2 -\sum_{i\in J}e_im_\zeta(F') - e_lm_\zeta(F_l^{e_l})\\
	&\geq m_\zeta(F)( m_\zeta(F) - \sum_{i\in J\cup\{l\}}e_i)                                                                                                                          
	\end{align*}
	So we can rewrite
	{\small
		\begin{align*}
		&A:=\sum_{J\subseteq \{1,\ldots,l-1\}} \left[ \sum_{\zeta\in S_J}m_\zeta(F')(m_{\zeta}(F') - \sum_{j\in J}e_j) + 2\sum_{\zeta\not\in T_{J\cup\{l\}}} m_\zeta(F')m_\zeta(F_l^{e_l})\right] + \sum_{\zeta\in S_{\{l\}}} m_\zeta(F_l^{e_l})\left( m_\zeta(F_l^{e_l}) - e_l\right) \\
		&\geq \sum_{J\subseteq \{1,\ldots,l-1\}} \left[ \sum_{\zeta\in S_J} m_\zeta(F)(m_{\zeta}(F) - \sum_{j\in J}e_j) +\sum_{\zeta \in S_{J\cup\{l\}}} m_\zeta(F)( m_\zeta(F) - \sum_{j\in J\cup\{l\}}e_i)\right]  +\sum_{\zeta \in S_{\{l\}}}m_\zeta(F)(m_{\zeta}(F) - e_l) \\
		&=\sum_{I\in \{1,\ldots,l\}}\sum_{\zeta\in I} m_\zeta(F)(m_{\zeta}(F) - \sum_{i\in I}e_i).
		\end{align*}
	}
	On the other hand, if $\zeta\in T_{J\cup\{l\}}$, we must have $m_{\zeta}(F_l^{e_l}) + m_{\zeta}(F') = e_l + \sum_{j\in J}e_j$. Then summing over all $J\subseteq \{1,\ldots,l-1\}$, and 
	\begin{align*}
	B:=&\sum_{J}\sum\limits_{\substack{\zeta\in T_J\\|J|\geq 2}}  
	\!\! m_{\zeta}(F') + 2\sum_{\zeta\in T_{J\cup\{l\}}}m_{\zeta}(F')m_{\zeta}(F_l^{e_l})\\
	&=\sum_{J}\left[ \sum\limits_{\substack{\zeta\in T_J\\|J|\geq 2}}  
	\!\! m_{\zeta}(F) + 2\sum\limits_{\substack{\zeta\in T_{J\cup\{l\}}\\|J|\geq 2}}  
	\!\! m_{\zeta}(F')m_{\zeta}(F_l^{e_l})\right] + \sum_{i=1}^{l-1}\sum_{\zeta\in T_{\{i,l\}}}m_{\zeta}(F')m_{\zeta}(F_l^{e_l}) \\
	&\geq \sum_{J}\left[ \sum\limits_{\substack{\zeta\in T_J\\|J|\geq 2}}  
	\!\! m_{\zeta}(F) + \sum\limits_{\substack{\zeta\in T_{J\cup\{l\}}\\|J|\geq 2}} 
	\!\! m_{\zeta}(F)\right] + \sum_{i=1}^{l-1}\sum_{\zeta\in T_{\{i,l\}}}m_{\zeta}(F') = \sum_I\sum\limits_{\substack{\zeta\in T_I\\|I|\geq 2}}  
	\!\! m_{\zeta}(F).
	\end{align*}
	The last inequality holds because for $a, b \geq 1$, we must have $2ab \leq a+b$.
	
	Combining all of above computations, we have
	\begin{align*}
	\sum_{I}\left[ \sum_{\zeta\in S_I} m_\zeta(F)(m_{\zeta}(F) - \sum_{i\in I}e_i) + \sum\limits_{\substack{\zeta\in T_I\\|I|\geq 2}}  
	\!\! m_{\zeta}(F) \right]  \leq A+B\leq d(d-1).
	\end{align*}

\noindent 	
The conclusion thus follows. \qed 

\subsection{The Proof of Lemma \ref{lem:curvebranch}} \mbox{}\\  
{\bf Assertion (1):} By Definitions \ref{dfn:ntpk} and \ref{dfn:septree}, each $(f_{i,\zeta},k_{i,\zeta})$ whose parent node is $(f_{i-1,\mu},k_{i-1,\mu})$, must satisfies $1\!\leq\!k_{i-1,\mu}-k_{i,\zeta}\!\leq\!k_{i-1,\mu}-1$, and $1\!\leq\!k_{i,\zeta}\!\leq\!k-1$ for all $i\!\geq\!1$. 
	So considering any root to leaf path in $\cT_{p,k}(f)$,   
	it is clear that the depth of $\cT_{p,k}(f)$ can be no greater than 
	$1+(k-1) = k$.
	
	\medskip 
	\noindent 
	{\bf Assertion (2):} If $s(g) = s(h) = 0$, then by Lemma \ref{lem:sepcurve}, $\tf(\bx)\in \F_p[\bx]$ is square-free. As the multiplicity of any singular point is at least $2$, by Lemma \ref{lem:d-1}, $\tf$ has at most ${d\choose 2}$ many singular points. In this case, each edge emanating from the root of $\ctpk(f)$ corresponds to a unique singular point of $\tf_{0,0}$.
	
	Suppose otherwise, and without loss of generality $0 = s(g) < s(h) = c$, then each edge emanating from the root node correspond to the set $\{\zeta^{(0)}_1\}\times \F_p$ for a unique degenerate root $\zeta^{(0)}_1$ of the univariate polynomial $\tilde{g}(x_1)$. As $\tilde{g}$ has at most $\floor{\frac{\deg\tilde{g}}{2}} \leq \floor{\frac{d}{2}} \leq {d\choose 2}$ degenerate roots, we are done.
	
\medskip 
\noindent 
{\bf Assertion (3):}  
Suppose $f_{i-1,\mu} = g_{i-1}(x_1) + h_{i-1}(x_2)\in \Z[\bx]$ with $s(g_{i-1}) = s(h_{i-1})=0$. Then $\zeta^{(i-1)}$ is a singular point of $\tf_{i-1,\mu}$, and let	
\begin{align*}
s:=s(f_{i-1,\mu},\zeta^{(i-1)}) = \min_{0\leq i_1+i_2 \leq k_{i,\zeta-1}}\left\lbrace (i_1+i_2) + \ord_p\left(D^{i_1,i_2}f_{i-1,\mu}(\zeta^{(i-1)})\right) \right\rbrace 
\end{align*} 
So then for each pair of $(\ell_1,\ell_2)$ with $\ell_1+\ell_2 \geq s+1$, the coefficient of $x^{\ell_1}_1x^{\ell_2}_2$ in the perturbation $f_{i-1,\mu}(\zeta^{(i-1)}+p\bx)$ must be divisible by $p^{s+1}$. In other words, the coefficient of $x^{\ell_1}_1x^{\ell_2}_2$ in $f_{i,\zeta}(\bx)$ must be divisible by $p$. So $\deg\tf_{i,\zeta}\leq s$.

	Now by Lemma \ref{lem:nmul}, we know that the multiplicity of $\zeta^{(i-1)}$ on $\tf_{i-1,\mu}$: $m_{\zeta^{(i-1)}}(\tf_{i-1,\mu}) \geq s(f_{i-1,\mu},\zeta^{(i-1)})$. Combining with \ref{lem:d-1}, we have
	\begin{align*}
	\sum\limits_{\substack{(f_{i,\zeta},k_{i,\zeta}) \text{ a child}\\ 
			\text{of } (f_{i-1,\mu},k_{i-1,\mu})}}  \!\!\!\!\!
	\!\!\!\!\!
	\deg \tf_{i,\zeta}\left( \deg \tf_{i,\zeta}-1\right) &\leq \sum\limits_{\substack{\zeta^{(i-1)} \text{ sing.}\\ 
			\text{point on } \tf_{i-1,\mu}}}m_{\zeta^{(i-1)}}(\tf_{i-1,\mu}) \left( m_{\zeta^{(i-1)}}(\tf_{i-1,\mu})  - 1\right) \\
	&\leq  \deg\tf_{i-1,\mu}\left( \deg\tf_{i-1,\mu} - 1\right). 
	\end{align*}

Suppose without loss of generality, $0 = s(g_{i-1}) < s(h_{i-1}) = c$. Then by a similar argument $\deg\tf_{i,\zeta}\leq s(f_{i-1,\mu},\zeta^{(i-1)}) = \min(s(\tilde{g},\zeta^{(i-1)}_1), c) \leq s(\tilde{g},\zeta^{(i-1)}_1)$. By 
Lemma \ref{lem:curvebranch} we have that 
$\sum\limits_{\substack{\zeta^{(i-1)}_1 \text{ a deg.}\\ 
			\text{root of } \tilde{g}_{i-1}}}  
	\!\!\!\!\!
	s(\tilde{g}_{i-1},\zeta^{(i-1)}_1)\leq \deg \tilde{g}_{i-1}$, so 
then $\sum\limits_{\substack{(f_{i,\zeta},k_{i,\zeta}) \text{ a child}\\ 
			\text{of } (f_{i-1,\mu},k_{i-1,\mu})}}\deg\tf_{i,\zeta}\leq \deg \tilde{g}_{i-1}$. We are done, simply by observing that for 
$\deg\tf_{i,\zeta} \geq 2$ and any collections of $a_i > 2$, we must have 
$\sum a_i(a_i-1)\leq \left( \sum a_i\right)\left( \sum a_i-1\right)$.

\medskip 
\noindent 
{\bf Assertion (4):} This is immediate from Assertions (1) and (3). \qed 

\subsection{The Proof of Lemma \ref{lem:sepbasic}}   
Any points over $\F_p$ on $\tf(\bx)$ is nonsingular if and only if $D^{1,0}(\tf) = \tilde{g}'(x_1) \neq 0\mod p$, as $h(x_2)$ is identically $0$ mod $p$. In other words, any nonsingular point on $\tf$ should be of the form $(\zeta^{(0)}_1,y)$ where $\zeta^{(0)}_1$ is a non-degenerate root of $\tilde{g}$, and any choice of $y\in \{0,1,\ldots,p-1\}$. So the number of non-singular point on $\tf$ is: $n_p(f) = p\cdot n_p(g)$. Then the first summand in the equation is obvious by plugging into the first summand in Lemma \ref{lem:nrec}.

	Now suppose $\zeta_0 :=\zeta^{(0)}_1$ is a degenerate root of the univariate polynomial $\tilde{g}$, and $\zeta^{(0)} = \{\zeta_0\}\times \F_p$. Write $\sigma = \zeta_0 + p\tau$, where $\tau:=\zeta_1+\ldots+p^{k-2}\zeta_{k-1}\in \Z/p^{k-1}\Z$ via base-$p$ expansion. Then by definition 
$f(\zeta_0+px_1, x_2) = p^{s(f, \zeta^{(0)})}f_{1,\zeta^{(0)}}(x_1,x_2)$, 
where $f_{1,\zeta^{(0)}}\in\Z[x_1,x_2]$ does not vanish identically mod $p$. 
	
	If $k \geq s(f,\zeta^{(0)})$, then $f(\sigma,y) = 0\mod p^k$ regardless of choice of $\tau\in \Z/p^{k-1}\Z$ and $y\in \Z/p^k\Z$. So there are exactly $p^{k-1}\cdot p^k = p^{2k-1}$ many pairs of $(\sigma,y)\in (\zpk)^2$ such that $\sigma = \zeta_0 \mod p$ and $f(\sigma,y) = 0\mod p^k$. 
	
	If $s(f,\zeta^{(0)}) \leq k-1$, then $f(\sigma,y) = 0 \mod p^k$ if and only if 
	\begin{equation} \label{eq:xpert}
	f_{1,\zeta^{(0)}}(\tau, y) = 0\mod p^{k-s(f,\zeta^{(0)})}.
	\end{equation}
	Let $s:=s(f,\zeta^{(0)})$, then $\tau = \zeta_1+p\zeta_2+\ldots+p^{k-s-1}\zeta_{k-s}\mod p^{k-s}$ and $y:=\sum_{i=0}^{k-1}p^iy_i = y_0+\ldots+p^{k-s-1}y_{k-s-1}\mod p^{k-s}$. So the rest of the base-$p$ digits, $\zeta_{k-s+1},\ldots,\zeta_{k-1}$ and $y_{k-s},\ldots,y_{k-1}$ respectively does not appear in 
Equality (\ref{eq:xpert}). The possible lifts $\zeta$ where the first 
coordinate mod $p$ is $\zeta_0$ is thus exactly $p^{s-1}\cdot p^{s}$ times the 
number of roots $(\tau,y)\in (\Z/p^{k-s}\Z)^2$ of $f_{1,\zeta^{(0)}}$. \qed 

\subsection{Exceptional Curves}
Let $f(\bx)\in \Z[\bx]$ be a nonconstant polynomial, and let $s(f)$ denote the largest $p$-th power dividing all the coefficients of $f$.

Consider $f(\bx) = g^{d}(\bx) +p^{cd}h(\bx)\in \Z[\bx]$, with $d\geq 2$ and $c\geq 1$. Moreover, $f(\bx) \equiv g^{d}(\bx) \mod p$ and $f$ is irreducible mod $p$.

For $k\leq cd$, $f(\bx) = g^{d}(\bx) \mod p^{k}$. Now suppose $\zeta^{(0)}$ is 
a smooth point on $(g\mod p)$. Then by Hensel's Lemma (Lemma 
\ref{lem:nhensel}), $\zeta^{(0)}$ lifts to $p^{\ceil{\frac{k}{d}}-1}$ many 
roots of $g$ mod $p^{\ceil{\frac{k}{d}}}$. Suppose $\sigma$ is one of the lift, 
then $\sigma+p\tau$ for any $\tau\in (\Z/p^{k-\ceil{\frac{k}{d}}}\Z)^2$ is a 
root of $(g^d \mod p^k)$. So each $\zeta^{(0)}$ lifts to 
$p^{\ceil{\frac{k}{d}}-1}\cdot p^{2(k-\ceil{\frac{k}{d}})} 
= p^{2k - \ceil{\frac{k}{d}} - 1}$ many roots of $f$ mod $p^k$. 

Now suppose $k>cd$, and let $\zeta$ be a root of $f\mod p^{cd}$ such that $\zeta^{(0)} \equiv \zeta \mod p$ is a smooth point on $g$. Consider the Taylor 
expansion of $f$ at $\zeta$:
\begin{align}
&f(\zeta+p^{cd}\bx) = [g(\zeta) + T(\bx)]^d + p^{cd}h(\zeta+p^{cd}\bx)\nonumber\\
=&\left[ g(\zeta)^d + p^{cd}h(\zeta)\right] + \sum_{l=1}^dg(\zeta)^{d-l}T(\bx)^l + \sum_{i_1+i_2\geq 1} D^{i_1,i_2}h(\zeta)p^{cd(i_1+i_2+1)}x_1^{i_1}x_2^{i_2} 
\label{eq:pert}
\end{align}
where $T(\bx):=g(\zeta+p^{cd}\bx) - g(\zeta) = \sum_{i_1+i_2\geq 1} D^{i_1,i_2}g(\zeta)p^{cd(i_1+i_2)}x_1^{i_1}x_2^{i_2}$. 
As $\zeta^{(0)}$ is a smooth point on $g$, either $D^{1,0}g(\zeta)$ or $D^{0,1}g(\zeta)$ is not zero mod $p$.  Then $s(T) = cd$, and each term in the second summand of Equality (\ref{eq:pert}) has valuation $(d-l)\ord_pg(\zeta) + lcd$.  

If $\zeta^{(0)}$ is also a point on $h\mod p$, then $\zeta$ continues to lift, and by Lemma \ref{lem:bezout}, there are at most $d^2$ many such $\zeta^{(0)}$. However, there are cases when $h(\zeta) \neq 0\mod p$, yet 
$\zeta$ continues to lift to $p^k$ for $k>cd$. 

This could only happen when $g(\zeta)^d+p^{cd}h(\zeta) \equiv 0 \mod 
p^{cd+1}$, and in which case $\ord_pg(\zeta) = c$. Now the second summand in 
Equality (\ref{eq:pert}) must have order $(d-1)c + cd$, whereas the third 
summand has order $\geq 2cd$. So now 
$s(f,\zeta) = \min\left\lbrace \ord_p\left( g(\zeta)^d+p^{cd}h(\zeta)\right), (d-1)c+cd\right\rbrace$. If $s(f,\zeta) < (d-1)c+cd$ then $\tilde{f}_{cd,\zeta} 
= \frac{f(\zeta+p^{cd}\bx)}{p^{s(f,\zeta)}}\mod p$ is a nonzero constant, and 
thus $\zeta$ does not lift. Suppose otherwise. Then
\begin{align*}
\tilde{f}_{cd,\zeta} = \frac{g(\zeta)^d+p^{cd}h(\zeta)}{p^{s(f,\zeta)}} + \frac{dg(\zeta)^{d-1}}{p^{(d-1)c}}\left( D^{1,0}g(\zeta)x_1 + D^{0,1}g(\zeta) x_2\right) \mod p, 
\end{align*}
which defines a line! By Hensel's Lemma, we are done!

So the problem boils down to determining a criterion for when 
$\ord_p\left( f(\zeta)^d+p^{cd}h(\zeta)\right)$\linebreak 
$\geq (d-1)c+cd$ and $h(\zeta)\neq 0\mod p$ happens. Also, we need to compute 
$\ord_p\left( f(\zeta)^d+p^{cd}h(\zeta)\right)$ for every lift 
$\zeta \mod p^{cd}$ for each non-isolated singular points $\zeta^{(0)}$, and 
there are exactly $p^{cd-1}$ many such $\zeta$. 

In summary, computing perturbations for each and every singular point of 
$\tf$ can be very expensive going into higher dimensions: the 
underlying singular locus might not be zero-dimensional, and thus imply  
the calclulation of a number of perturbations super-linear in $p$.

It turns out for {\em some} families of curves, non-isolated singular points 
partitioned into groups that each lift uniformly. We will pursue this 
improvement in future work. 

\section*{Acknowledgements} 
We are grateful to Daqing Wan for helpful comments on curves and 
error correcting codes. 

\bibliographystyle{alpha}
\bibliography{myrefs}

\newcommand{\etalchar}[1]{$^{#1}$}
\begin{thebibliography}{BGMWo11}

\bibitem[AH01]{ah01}
Leonard~M. Adleman and Ming-Deh Huang.
\newblock Counting points on curves and abelian varieties over finite fields.
\newblock {\em Journal of Symbolic Computation}, 32(3):171 -- 189, 2001.

\bibitem[BGMWo11]{grigoriev}
Edward Bierstone, Dima Grigoriev, Pierre Milman, and Jaros\l~aw W\l~odarczyk.
\newblock Effective {H}ironaka resolution and its complexity.
\newblock {\em Asian J. Math.}, 15(2):193--228, 2011.

\bibitem[BLQ13]{blq13}
J{\`e}r{\`e}my Berthomieu, Gr{\`e}goire Lecerf, and Guillaume Quintin.
\newblock Polynomial root finding over local rings and application to error
  correcting codes.
\newblock {\em Appl. Algebra Eng. Commun. Comput.}, 24:413--443, 2013.

\bibitem[BM20]{bala}
Jennifer~S. Balakrishnan and J.\~Steffen M\"{u}ller.
\newblock Computational tools for quadratic chabauty.
\newblock preprint, Boston University, 2020.
\newblock draft of lecture notes for 2020 Arizona Winter School on Nonabelian
  Chabauty.

\bibitem[BS96]{bs}
Eric Bach and Jeff Shallit.
\newblock {\em Algorithmic Number Theory, Vol. I: Efficient Algorithms}.
\newblock MIT Press, Cambridge, MA, 1996.

\bibitem[BW10]{wasan}
Maheshanand Bhaintwal and Siri~Krishan Wasan.
\newblock Generalized {R}eed-{M}uller codes over {$\mathbb{Z_q}$}.
\newblock {\em Des. Codes Cryptogr.}, 54(2):149--166, 2010.

\bibitem[CDV06]{cdv06}
Wouter Castryck, Jan Denef, and Frederik Vercauteren.
\newblock Computing zeta functions of nondegenerate curves.
\newblock Technical report, International Mathematics Research Papers, vol.
  2006, article ID 72017, 2006.

\bibitem[CGRW18]{cgrw2}
Qi~Cheng, Shuhong Gao, J.~Maurice Rojas, and Daqing Wan.
\newblock Counting roots for polynomials modulo prime powers.
\newblock In {\em Proceedings of ANTS XIII (Algorithmic Number Theory
  Symposium, July 16--20, 2018, University of Wisconsin, Madison)}.
  Mathematical Sciences Publishers (Berkeley, California), 2018.

\bibitem[CH15]{cohnheninger}
Henry Cohn and Nadia Heninger.
\newblock Ideal forms of {C}oppersmith's theorem and {G}uruswami-{S}udan list
  decoding.
\newblock {\em Advances in Mathematics of Communications}, 9(3):311--339, 2015.

\bibitem[CL08]{chambert}
Antoine Chambert-Loir.
\newblock Computer (rapidement) le nombre de solutions d'\'equations dans les
  corps finis.
\newblock {\em S\'eminaire Bourbaki}, 2006/2007:39--90, 2008.

\bibitem[Del74]{del74}
Pierre Deligne.
\newblock La conjecture de weil. i.
\newblock {\em Publications Math{\'e}matiques de l'Institut des Hautes
  {\'E}tudes Scientifiques}, 43(1):273--307, Dec 1974.

\bibitem[DMS19]{DMS19}
Ashish {Dwivedi}, Rajat {Mittal}, and Nitin {Saxena}.
\newblock {Counting basic-irreducible factors mod $p^k$ in deterministic
  poly-time and $p$-adic applications}.
\newblock {\em arXiv e-prints}, page arXiv:1902.07785, Feb 2019.

\bibitem[DMS20]{dwivedi}
Ashish {Dwivedi}, Rajat {Mittal}, and Nitin {Saxena}.
\newblock {Computing Igusa’s Local Zeta Function of Univariates in
  Determinstic Polynomial-Time}.
\newblock In S.~K. Galbraith, editor, {\em Proceedings of ANTS 2020
  (Algorithmic Number Theory Symposium)}. Mathematical Sciences Publishers
  (Berkeley, California), 2020.

\bibitem[GCM91]{mullen91}
Javier Gomez-Calderon and Gary~L. Mullen.
\newblock Galois rings and algebraic cryptography.
\newblock {\em Acta Arith.}, 59(4):317--328, 1991.

\bibitem[GG16]{galbraith}
Steven~D. Galbraith and Pierrick Gaudry.
\newblock Recent progress on the elliptic curve discrete logarithm problem.
\newblock {\em Des. Codes Cryptogr.}, 78(1):51--72, 2016.

\bibitem[GS99]{gs99}
V.~{Guruswami} and M.~{Sudan}.
\newblock Improved decoding of reed-solomon and algebraic-geometry codes.
\newblock {\em IEEE Transactions on Information Theory}, 45(6):1757--1767, Sep.
  1999.

\bibitem[GSS00]{gusasu}
Venkatesan Guruswami, Amit Sahai, and Madhu Sudan.
\newblock ``{S}oft-decision'' decoding of {C}hinese remainder codes.
\newblock In {\em 41st {A}nnual {S}ymposium on {F}oundations of {C}omputer
  {S}cience ({R}edondo {B}each, {CA}, 2000)}, pages 159--168. IEEE Comput. Soc.
  Press, Los Alamitos, CA, 2000.

\bibitem[Har15]{har15}
David Harvey.
\newblock {Computing zeta functions of arithmetic schemes}.
\newblock {\em Proceedings of the London Mathematical Society},
  111(6):1379--1401, 11 2015.

\bibitem[HKC{\etalchar{+}}94]{hammons}
A.~Roger Hammons, Jr., P.~Vijay Kumar, A.~R. Calderbank, N.~J.~A. Sloane, and
  Patrick Sol\'{e}.
\newblock The {${\bf Z}_4$}-linearity of {K}erdock, {P}reparata, {G}oethals,
  and related codes.
\newblock {\em IEEE Trans. Inform. Theory}, 40(2):301--319, 1994.

\bibitem[Igu07]{igu}
Jun-Ichi Igusa.
\newblock {\em An Introduction to the Theory of Local Zeta Functions}.
\newblock AMS/IP Studies in Pure Maths Rep Series. American Mathematical
  Society, 2007.

\bibitem[Ked01]{ked01}
Kiran~S. Kedlaya.
\newblock Counting points on hyperelliptic curves using {M}onsky-{W}ashnitzer
  cohomology.
\newblock {\em J. Ramanujan Math. Soc.}, 16(4):323--338, 2001.

\bibitem[Ked06]{kedlaya}
Kiran~S. Kedlaya.
\newblock Quantum computation of zeta functions of curves.
\newblock {\em Comput. Complexity}, 15(1):1--19, 2006.

\bibitem[KLP12]{lovett}
Tali Kaufman, Shachar Lovett, and Ely Porat.
\newblock Weight distribution and list-decoding size of {R}eed-{M}uller codes.
\newblock {\em IEEE Trans. Inform. Theory}, 58(5):2689--2696, 2012.

\bibitem[Kob87]{koblitz}
Neal Koblitz.
\newblock Elliptic curve cryptosystems.
\newblock {\em Math. Comp.}, 48(177):203--209, 1987.

\bibitem[KRRZ19]{krrz19}
Leann {Kopp}, Natalie {Randall}, J.~Maurice {Rojas}, and Yuyu {Zhu}.
\newblock {Randomized Polynomial-Time Root Counting in Prime Power Rings}.
\newblock {\em Mathematics of Computation}, in production, 2019.

\bibitem[KU08]{ku08}
Kiran Kedlaya and Christopher Umans.
\newblock Fast polynomial factorization and modular composition.
\newblock In P.~Bro Miltersen, R.~Reischuk, G.~Schnitger, and D.~van Melkebeek,
  editors, {\em Computational Complexity of Discrete Problems}, number 08381 in
  Dagstuhl Seminar Proceedings, Dagstuhl, Germany, 2008. Schloss Dagstuhl -
  Leibniz-Zentrum fuer Informatik, Germany.

\bibitem[LW08]{lauderwan}
Alan G.~B. Lauder and Daqing Wan.
\newblock Counting points on varieties over finite fields of small
  characteristic.
\newblock In {\em Algorithmic number theory: lattices, number fields, curves
  and cryptography}, pages 579--–612, Cambridge, 2008. Math. Sci. Res. Inst.
  Publ., 44, Univ. Press.

\bibitem[Mil86]{miller}
Victor~S. Miller.
\newblock Use of elliptic curves in cryptography.
\newblock In {\em Advances in cryptology---{CRYPTO} '85 ({S}anta {B}arbara,
  {C}alif., 1985)}, volume 218 of {\em Lecture Notes in Comput. Sci.}, pages
  417--426. Springer, Berlin, 1986.

\bibitem[NZM91]{nzm91}
I.~Niven, H.S. Zuckerman, and H.L. Montgomery.
\newblock {\em An Introduction to the Theory of Numbers}.
\newblock Wiley, 1991.

\bibitem[Pil90]{pil90}
J.~Pila.
\newblock Frobenius maps of abelian varieties and finding roots of unity in
  finite fields.
\newblock {\em Mathematics of Computation}, 55(192):745--763, 1990.

\bibitem[PR11]{poteaux}
Adrien Poteaux and Marc Rybowicz.
\newblock Complexity bounds for the rational {N}ewton-{P}uiseux algorithm over
  finite fields.
\newblock {\em Appl. Algebra Engrg. Comm. Comput.}, 22(3):187--217, 2011.

\bibitem[Roj99]{roj99}
J.~Maurice Rojas.
\newblock Solving degenerate sparse polynomial systems faster.
\newblock {\em Journal of Symbolic Computation}, 28(1):155 -- 186, 1999.

\bibitem[RZ20]{rojaszhu}
J.~Maurice Rojas and Yuyu Zhu.
\newblock A complexity chasm for solving sparse polynomial equations over
  $p$-adic fields.
\newblock {\em arXiv e-prints}, page arXiv:2003.00314, 2020.

\bibitem[Sch85]{sch85}
René Schoof.
\newblock Elliptic curves over finite fields and the computation of square
  roots $\operatorname{mod} p$.
\newblock {\em Mathematics of Computation}, 44(170):483--494, 1985.

\bibitem[Sud97]{sudan97}
Madhu Sudan.
\newblock Decoding of {R}eed {S}olomon codes beyond the error-correction bound.
\newblock {\em J. Complexity}, 13(1):180--193, 1997.

\bibitem[vdG01]{vandergeer}
Gerard van~der Geer.
\newblock Curves over finite fields and codes.
\newblock In {\em European {C}ongress of {M}athematics, {V}ol. {II}
  ({B}arcelona, 2000)}, volume 202 of {\em Progr. Math.}, pages 225--238.
  Birkh\"{a}user, Basel, 2001.

\bibitem[vzGG13]{vzgbook}
Joachim von~zur Gathen and J\"urgen Gerhard.
\newblock {\em Modern Computer Algebra}.
\newblock Cambridge University Press, 3rd edition, 2013.

\bibitem[Wan08]{dwan}
Daqing Wan.
\newblock Algorithmic theory of zeta functions over finite fields.
\newblock In {\em Algorithmic number theory: lattices, number fields, curves
  and cryptography}, pages 551--578. Math. Sci. Res. Inst. Publ., 44, Univ.
  Press, Cambridge, 2008.

\bibitem[Wei49]{wei49}
André Weil.
\newblock Numbers of solutions of equations in finite fields.
\newblock {\em Bull. Amer. Math. Soc.}, 55(5):497--508, May 1949.

\bibitem[Zhu20]{zhu}
Yuyu Zhu.
\newblock Trees, point counting beyond fields, and root separation.
\newblock Ph.d. thesis, Texas A\&{} University, 2020.

\end{thebibliography}
\end{document}